\newcommand{\RR}{\mathbb{R}}
\newcommand{\NN}{\mathcal{N}}
\newcommand{\FF}{\mathcal{F}}
\newcommand{\MM}{\mathcal{M}}
\newcommand{\lk}{\mathsf{lk}}
\newcommand{\CC}{\mathsf{C}}
\newcommand{\HH}{\mathcal{H}}
\newcommand{\sK}{\mathsf{K}}
\newcommand{\sL}{\mathsf{L}}
\newcommand{\sB}{\mathsf{B}}
\newcommand{\cS}{\mathsf{S}}
\newcommand{\XX}{\mathsf{X}}
\newcommand{\YY}{\mathsf{Y}}
\newtheorem{thm}{Theorem}[section]
\newtheorem{Q}{Question}
\newtheorem{lemma}[thm]{Lemma}
\newtheorem*{claim}{Claim}
\newtheorem{prop}[thm]{Proposition}
\theoremstyle{definition}
\newtheorem{Def}[thm]{Definition}
\newtheorem{remark}[thm]{Remark}
\newtheorem{eg}[thm]{Example}
\DeclareMathOperator{\conv}{conv}
\newcommand*\patchAmsMathEnvironmentForLineno[1]{%
  \expandafter\let\csname old#1\expandafter\endcsname\csname #1\endcsname
  \expandafter\let\csname oldend#1\expandafter\endcsname\csname end#1\endcsname
  \renewenvironment{#1}%
     {\linenomath\csname old#1\endcsname}%
     {\csname oldend#1\endcsname\endlinenomath}}% 
\newcommand*\patchBothAmsMathEnvironmentsForLineno[1]{%
  \patchAmsMathEnvironmentForLineno{#1}%
  \patchAmsMathEnvironmentForLineno{#1*}}%
\title{Transversals and colorings of simplicial spheres}
\date{} 
\author[1]{Joseph Briggs\thanks{jgb0059@auburn.edu}}
\author[2]{Michael Gene Dobbins\thanks{mdobbins@binghamton.edu}}
\author[3]{Seunghun Lee\thanks{seunghun.lee@mail.huji.ac.il}\thanks{Corresponding author}}
\affil[1]{Department of Mathematics and Statistics, Auburn University, Auburn, AL, USA.}
\affil[2]{Department of Mathematical Sciences, Binghamton University, Binghamton, NY, USA. }
\affil[3]{Einstein Institute of Mathematics, Hebrew University, Jerusalem, Israel.}
\begin{document}
\maketitle

\begin{abstract}
	Motivated from the surrounding property of a point set in $\RR^d$ introduced by Holmsen, Pach and Tverberg, we consider the transversal number and chromatic number of a simplicial sphere. As an attempt to give a lower bound for the maximum transversal ratio of simplicial $d$-spheres, we provide two infinite constructions. The first construction  gives infintely many $(d+1)$-dimensional simplicial polytopes with the transversal ratio exactly $\frac{2}{d+2}$ for every $d\geq 2$. In the case of $d=2$, this meets the previously well-known upper bound $1/2$ tightly. The second gives infinitely many simplicial 3-spheres with the transversal ratio greater than $1/2$.  This was unexpected from what was previously known about the surrounding property. Moreover, we show that, for $d\geq 3$, the facet hypergraph $\mathcal{F}(\sK)$ of a $d$-dimensional simplicial sphere $\sK$ has the chromatic number $\chi(\mathcal{F}(\sK)) \in O(n^{\frac{\lceil d/2\rceil-1}{d}})$, where $n$ is the number of vertices of $\sK$. This slightly improves the upper bound previously obtained by Heise, Panagiotou, Pikhurko, and Taraz.
\end{abstract}

%\keywords{Chromatic numbers, Transversals, Geometric hypergraphs, Simplicial spheres, Polytopes}

%\pacs[MSC Classification]{05C10, 05C15, 05C35, 05C65, 05C69}

\section{Introduction}\label{section_intro}

Given a hypergraph $\HH$ on $n$ vertices, a \textit{(weak) coloring} of $\HH$ is a function $c:V(\HH)\to [m]=\{1, \dots, m\}$ such that for every \textit{color} $i\in [m]$, $c^{-1}(i)$ is an \textit{independent set} of $\HH$, that is, $c^{-1}(i)$ does not contain any hyperedges of $\HH$. $\HH$ is said to be \textit{$m$-colorable} if there is a proper coloring of $\HH$ using $m$ colors. A subset $T$ of $V(\HH)$ is called a \textit{transversal} if for every hyperedge $e\in \HH$ there exists a vertex $v\in T$ such that $v \in e$, or equivalently, the complement $V(\HH)\setminus T$ is an independent set of $\HH$. The minimum number of colors required in a coloring of $\HH$ is called the \textit{(weak) chromatic number} of $\HH$ and we denote it by $\chi(\HH)$. The minimum size of a transversal of $\HH$ is called the \textit{transversal number} of $\HH$, and denoted by $\tau(\HH)$. The \textit{independence number} of $\HH$ is defined as the largest size of an independent set in $\HH$, and denoted by $\alpha(\HH)$.

\medskip 

For a simplicial complex $\sK$, we denote the \textit{facet hypergraph} of $\sK$, which consists of the facets of $\sK$, by $\FF(\sK)$. We similarly define the \textit{facet hypergraph of a polytope} $\mathbf{P}$ where each facet is represented as the set of its vertices, and denote it by $\FF(\mathbf{P})$. One of the goals of this note is to give a partial answer of the following question, which was originally proposed by Andreas Holmsen (personal communication).

\begin{Q}\label{question_trans}
Let $d$ be a fixed integer. How large can the transversal ratio $\tau(\FF(\sK))/n$ be as $n$ approaches to $\infty$, where $\sK$ is a $d$-dimensional simplicial sphere on $n$ vertices?
\end{Q}

Question \ref{question_trans} for simplicial polytopes was implicitly investigated in a dual form in \cite{holmsen_surrounding} by Holmsen, Pach and Tverberg. They considered the following properties which arise from combinatorial convexity.
Given a finite point set $P \subseteq \RR^d$ in general position with respect to the origin $O$, $P$ is said to be \textit{$k$-surrounding}, or is said to have \textit{property $S(k)$}, if any $k$-element subset of $P$ can be extended to a $(d+1)$-element subset of $P$ such that $O\in \conv(P)$. 
By applying the Gale transform to $P$, we can obtain a dual point set $P^*$ in $\RR^{n-d-1}$, and there is the dual property $S^*(k)$ of $P^*$ which is defined as the following: Among any $n-k$ points of $P^*$, there are some $n-d-1$ that form a facet of $\conv( P^*)$. Using fundamental facts of the Gale transform, one can easily see that $P$ has property $S(k)$ if and only if $P^*$ satisfies $S^*(k)$. Also, note that when points of $P^*$ are
in convex position, we have that $P^*$ satisfies $S^*(k)$ if and only if the facet hypergraph $\FF(\conv(P^*))$ has no transversal of size $k$.  

The $S(k)$ \textit{existence problem} \cite[Problem 3]{holmsen_surrounding}
asks for a point set in general position with respect to $O$ that satisfies $S(k)$ for large values of $k$ when the dimension $d$ is fixed. 
In \cite{holmsen_surrounding}, such a construction was obtained from even-dimensional cyclic polytopes $C_{d^*}(n)$ in the dual space which satisfy $S^*(\lfloor d/2 \rfloor+1)$, or equivalently $S^*(\lceil (n-d^*)/2 \rceil)$, where $d^*=n-d-1$ is the dimension of the dual space. Note that it is equivalent to $\tau(\FF(C_{d^*}(n)))>k$. In this way, their investigation of the dual surrounding property $S^*(k)$ has a connection to lower bounds on the transversal ratio of a simplicial sphere. 

\medskip 

The exact transversal number of a cyclic polytope is given by $\tau(\FF(C_{d^*}(n)))=\lceil (n-d^*)/2 \rceil+1$ when $d^*$ is even, and $\tau(\FF(C_{d^*}(n)))=2$ when $d^*$ is odd.
See Proposition \ref{prop_basic_cyclic_col} and a remark after the proof. This implies that, for $d^* \geq 3$, the transversal ratio $\tau/n$ of $C_{d^*}(n)$ is bounded by $1/2$ from above.  In addition to cyclic polytopes, it can be checked from the result in \cite[Section 4.8]{miyata} by Miyata and Padrol that the same bound works for all the neighborly oriented matroids constructed in that paper.  Furthermore, all examples except for odd cycles in Section \ref{section_basic_examples} satisfy 2-colorability which is an even stronger property.

One of the examples in Section \ref{section_basic_examples} is the class of 3-dimensional simplicial polytopes. For this case, we obtain tight lower bound constructions that have transversal ratio exactly $1/2$, which matches the upper bound for simplicial 3-polytopes.  See also Proposition \ref{prop_basic_3-polytope}. This construction generalizes to higher dimensions as follows.
\begin{thm}\label{thm_plane_tight}
For every $d \geq 2$, there are infinitely many $(d+1)$-dimensional simplicial polytopes with the transversal ratio exactly $\frac{2}{d+2}$. 
\end{thm}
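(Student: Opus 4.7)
Plan:

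The target ratio is already attained by the $(d+1)$-simplex $\Delta^{d+1}$: its $d+2$ facets are the $(d+1)$-subsets of its vertex set, so any two vertices form a transversal, and $\tau(\FF(\Delta^{d+1}))=2$, giving ratio exactly $\frac{2}{d+2}$. To produce an infinite family, I propose constructing, for each $m\geq 1$, a simplicial $(d+1)$-polytope $P_m$ on $n_m = m(d+2)$ vertices with $\tau(\FF(P_m)) = 2m$. A natural realisation of $P_m$ is the convex hull of $m$ small, affinely generic copies $\Delta_1,\ldots,\Delta_m$ of $\Delta^{d+1}$ placed in $\RR^{d+1}$ — for instance, centred at well-separated points along the moment curve with radii chosen much smaller than the mutual separations. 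An alternative is an iterative construction: starting from $P_1 = \Delta^{d+1}$, repeatedly perform a ``simplex-attachment'' operation that adds a fresh block of $d+2$ vertices and raises $\tau$ by exactly $2$.

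The proof then proceeds in three steps. First, verify that $P_m$ is indeed a simplicial $(d+1)$-polytope with $n_m = m(d+2)$ vertices; this follows from standard general-position arguments applied to the convex hull of the union of the $\Delta_i$. Second, give an explicit transversal of size $2m$: choose two vertices $u_i,w_i$ from each block $V(\Delta_i)$ and verify that $T = \bigcup_{i=1}^{m}\{u_i,w_i\}$ meets every facet of $P_m$, including the bridging facets whose vertex sets span more than one $\Delta_i$. Third, establish the matching lower bound $\tau\geq 2m$. The cleanest way is to exhibit, within each block $V(\Delta_i)$, a pair of facets of $P_m$ whose union equals $V(\Delta_i)$; any transversal must then include at least two vertices from $V(\Delta_i)$, and since the blocks are pairwise vertex-disjoint, summing over $i$ gives $\tau\geq 2m$.

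The main obstacle will be the lower bound. Taking the convex hull necessarily introduces bridging facets whose vertex sets involve more than one $\Delta_i$, and a priori these could be covered economically by transversal vertices shared across blocks, threatening the block-additive lower bound. The construction must be chosen so that within each block a pair of genuinely local facets of $P_m$ survives, and so that no bridging facet is covered ``by accident'' in a way that lets some block contribute fewer than two vertices. Ensuring this rigidity — either through careful geometric placement of the $\Delta_i$, or through an inductive attachment operation designed to control the bridging facets — is the core technical step of the argument.
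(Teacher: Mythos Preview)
Your proposal is a plan rather than a proof, and the lower-bound step contains an actual error beyond the gap you acknowledge. You write that exhibiting in each block $V(\Delta_i)$ a pair of facets of $P_m$ whose union equals $V(\Delta_i)$ forces any transversal to use at least two vertices from $V(\Delta_i)$. But two $(d+1)$-element subsets of the $(d+2)$-element set $V(\Delta_i)$ necessarily share $d\ge 2$ vertices, and any one shared vertex hits both facets. What makes $\tau(\partial\Delta^{d+1})=2$ is that \emph{all} $d+2$ facets are present and each vertex misses exactly one; to reproduce this locally you would need every facet of $\Delta_i$ to survive as a facet of $P_m$, which for $m\ge 2$ is impossible (it would give $P_m\subset\bigcap_{F}H_F^{+}=\Delta_i$). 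So no block-local certificate of this kind exists, and once you argue across blocks the bridging facets destroy additivity. The upper bound is equally unestablished: a bridging facet distributes its $d+1$ vertices over several blocks with at most $d$ in each, so it can avoid any pre-chosen pair from every block, and the $1$-skeleton of a convex hull of well-separated clusters has no reason to be $(d+2)$-colourable.

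The paper's construction is combinatorial, not metric. It glues pieces by connected sum of PL spheres along a single facet, so every other facet of each piece survives verbatim, and polytopality is preserved by a projective-gluing lemma. The building block is not a bare simplex but $\CC^d_{+} = \CC^d \,\#\, \cS^d$, the cross-polytope boundary summed with a simplex boundary; it has $2d+3$ vertices, so each attachment adds exactly $(2d+3)-(d+1)=d+2$ new vertices. The cross-polytope layer is what drives the lower bound: its band of facets on $f_{k-1}\cup f_{k-1}^{-}$ forces a three-case induction (on whether a minimum transversal meets none, some, or all of the glued facet $f_{k-1}$, while tracking the apex vertex) yielding $\tau(\XX^d_k)\ge 2k$. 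The upper bound comes from propagating a $(d+2)$-colouring of the $1$-skeleton through the connected sums via the antipodal map on $\CC^d$; any two colour classes then form a transversal of size at most $2k$. Your unspecified ``simplex-attachment'' alternative is closer in spirit, but a connected sum with $\partial\Delta^{d+1}$ adds only one vertex, not $d+2$; the cross-polytope layer is exactly what makes the arithmetic work.
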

Together with even-dimensional cyclic polytopes, the constructions of Theorem \ref{thm_plane_tight} give positive lower bounds to Question \ref{question_trans} for every dimension. 

\medskip

Even though these form a rather restricted set of examples, the bound of $1/2$ on the transversal ratio holds in surprisingly many cases, so it may be very tempting to conjecture that,
for a simplicial $d$-sphere $\sK$ and $d\geq 3$, $\FF(\sK)$ is 2-colorable, or at least satisfies the bound $\tau(\FF(\sK))\leq n/2+o(n)$. The following result demonstrates that this hope cannot be fulfilled.

\begin{thm}\label{thm_trans}
There exist arbitrarily large PL 3-spheres with transversal ratio at least $\frac{11}{21}$. 
\end{thm}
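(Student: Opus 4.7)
My plan is to prove Theorem~\ref{thm_trans} in two stages: first, exhibit a single PL 3-sphere $\sK_0$ on $n_0$ vertices whose transversal ratio $\tau(\FF(\sK_0))/n_0$ is at least $\tfrac{11}{21}$; second, amplify to arbitrarily large sizes via an operation that (nearly) preserves this ratio.

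For the base example, I would aim for a PL 3-sphere $\sK_0$ with small vertex count (likely a small multiple of $21$), described by an explicit facet list. Candidates may be assembled by iterating bistellar (Pachner) flips starting from the boundary of the 4-simplex, or by gluing together combinatorial 3-balls with a designed intersection structure between their boundary 2-spheres. One would then verify two things: (i) every vertex link in $\sK_0$ is a PL 2-sphere, confirming that $\sK_0$ is itself a PL 3-sphere; and (ii) the independence number $\alpha(\FF(\sK_0))$ does not exceed $\tfrac{10}{21} n_0$, equivalently $\tau(\FF(\sK_0)) \ge \tfrac{11}{21} n_0$. The second check is the crux and will likely be established either by LP duality (exhibiting a fractional cover of the right weight) or, for a small symmetric $\sK_0$, by an enumeration of candidate large independent sets that exploits the symmetry group of $\sK_0$.

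To produce arbitrarily large examples from $\sK_0$, I would use the facet-connected sum: if $\sK', \sK''$ are PL 3-spheres with chosen facets $F' \in \FF(\sK')$ and $F'' \in \FF(\sK'')$, the sum $\sK' \# \sK''$ is formed by identifying the vertices of $F'$ with those of $F''$ via a bijection and deleting the two identified facets. This is again a PL 3-sphere, with $|V(\sK')|+|V(\sK'')|-4$ vertices. Any transversal $T$ of $\FF(\sK' \# \sK'')$ restricts to a transversal of all facets of $\sK'$ except possibly $F'$, and similarly for $\sK''$; combined with the overlap $|T \cap V(F')| \le 4$, this yields $\tau(\FF(\sK' \# \sK'')) \ge \tau(\FF(\sK')) + \tau(\FF(\sK'')) - c$ for a small constant $c$. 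By choosing the gluing facet in $\sK_0$ so that removing it barely decreases $\tau$, the $k$-fold iterated sum $\sK_0^{\#k}$ has $k(n_0 - 4) + 4$ vertices and transversal number at least $k\, \tau(\FF(\sK_0)) - O(k)$, so its transversal ratio tends to that of $\sK_0$ and remains at least $\tfrac{11}{21}$.

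The principal obstacle is the construction and verification of $\sK_0$. Producing a PL 3-sphere whose facet hypergraph has no independent set larger than $\tfrac{10}{21} n_0$ goes against the typical behavior of 3-spheres---cyclic polytopes and simplicial 3-polytopes being even 2-colorable---so $\sK_0$ must exploit the extra flexibility of being PL but not polytopal: its facets must interlock in such a way that every large vertex subset omits at least one facet. Identifying a correct candidate and rigorously bounding its independence number is the main work; once $\sK_0$ is in hand, the composition step is essentially routine.
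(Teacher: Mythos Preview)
Your first stage matches the paper's approach in spirit: the paper also starts from a single computer-found example, namely a neighborly PL $3$-sphere $\sK_{21}$ on $21$ vertices with a distinguished facet $\hat g$ such that $\tau(\FF(\sK_{21}\setminus\{\hat g\}))=11$. So the ``seed'' part of your plan is fine.

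The gap is in your amplification step. When you take a direct facet-connected sum $\sK_0\#\sK_0$, the two copies share the four vertices of the gluing facet, and any transversal $T$ satisfies only
\[
|T|\;=\;|T\cap V_1|+|T\cap V_2|-|T\cap F|\;\ge\;\tau(\sK_0\setminus F_0)+\tau(\sK_0\setminus F_0)-4.
\]
Iterating, the $k$-fold sum has $k(n_0-4)+4$ vertices and the only lower bound your argument produces is $\tau\ge k\,\tau(\sK_0\setminus F_0)-4(k-1)$. For $n_0=21$ and $\tau=11$ this gives ratio $(7k+4)/(17k+4)\to 7/17\approx 0.412$, which is strictly below $11/21\approx 0.524$. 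The loss term you wrote as ``$-O(k)$'' is of the \emph{same} order as both numerator and denominator, so it changes the limiting ratio; in particular the ratio does \emph{not} ``tend to that of $\sK_0$.'' More generally, for any base with $\tau_0/n_0=11/21$ exactly, the limiting ratio $(\tau_0-4)/(n_0-4)$ is always $<11/21$, so no choice of $\sK_0$ rescues the naive connected sum.

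The paper's remedy is to insert a spacer: it glues each new copy of $\sK_{21}$ not directly to the previous sphere but through a copy of the cross-polytope boundary $\CC^3$, forming $\CC_{\sK}=\CC^3\#\sK_{21}$ and then $\YY_k=\YY_{k-1}\#\CC_{\sK}$ along the facet of $\CC^3$ antipodal to where $\sK_{21}$ was attached. Because antipodal facets of $\CC^3$ are vertex-disjoint, the $21$ new vertices introduced at step $i$ carry an induced copy of $\NN_{21}$ that shares \emph{no} vertices with the other copies. Thus the vertex set partitions into blocks $V_1,\dots,V_k$ of size $21$ with $\tau(\FF(\YY_k)[V_i])\ge 11$, giving $\tau(\YY_k)\ge 11k$ on $21k$ vertices with no overlap loss at all. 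Your scheme becomes correct once you add this spacer trick (or any device that makes the copies of the seed hypergraph sit on pairwise disjoint vertex sets).
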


These PL spheres are constructed by bootstrapping the hypergraph $\NN_{21}$ which was obtained from computer experiments, see Theorem \ref{thm_construction}.

\medskip 

As we have already observed above, for any hypergraph $\HH$ with $n$ vertices, we have
\[
\tau(\HH)=n-\alpha(\HH) \leq n-n/\chi(\HH).
\]
So a lower bound on the transversal ratio $\tau(\HH)/n$ implies a lower bound on the chromatic number $\chi(\HH)$. This motivates the following analogue of Question \ref{question_trans} for chromatic numbers.

\begin{Q}\label{question_chi}
Under the same conditions of Question \ref{question_trans}, how large can $\chi(\FF(\sK))$ be?
\end{Q}	

Theorem \ref{thm_trans} implies that we need at least 3 colors to properly color the facet hypergraph of a simplicial 3-sphere in general. Unfortunately, we do not have an upper bound for the chromatic number which closes the gap with this finite lower bound. 

The most relevant result regarding upper bounds on the chromatic number of a simplicial sphere can be found in \cite{coloring_d-embeddable}. There, Heise, Panagiotou, Pikhurko, and Taraz obtained an upper bound on the chromatic number of linearly $d$-embeddable $(d+1)$-uniform hypergraphs. They used the Lov\'asz local lemma and the upper bound theorem by Dey and Pach \cite{upper_bound_pach} on the number of hyperedges as main ingredients. Following the same argument and using Stanley's upper bound theorem on simplicial spheres \cite{upper_bound_stanley} instead, one can easily get the same asymptotic upper bound $\chi(\FF(\sK))\in O(n^{\frac{\lceil d/2\rceil}{d}})$ for a simplicial $d$-sphere $\sK$.

Our new result is a slight improvement on this by a different probabilistic approach. Even though the result holds generally for simplicial spheres, we state and prove it for polytopes first for simplicity of arguments. Then, we explain how it can be generalized for simplicial spheres in Subsection \ref{subsection_upper_bound_embeddable}.

\begin{thm}\label{thm_main_upper_bound}
Let $\mathbf{P}$ be a $(d+1)$-dimensional simplicial polytope on $n$ vertices. Then, the facet hypergraph $\FF(\mathbf{P})$ has the chromatic number 
$$\chi(\FF(\mathbf{P}))\in O(n^{\frac{\lceil d/2\rceil-1}{d}}).$$
\end{thm}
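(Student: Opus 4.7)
The plan is to combine Stanley's upper bound theorem with a probabilistic coloring argument. First I would establish the relevant structural inputs: a simplicial $(d+1)$-polytope on $n$ vertices has at most $m = O(n^{\lceil d/2\rceil})$ facets (applying Stanley's upper bound theorem to the boundary $d$-sphere), and the link of each vertex is a simplicial $(d-1)$-sphere on at most $n-1$ vertices, so the maximum vertex degree in $\FF(\mathbf{P})$ is $\Delta = O(n^{\lfloor d/2\rfloor})$.

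For odd $d$, a direct application of the symmetric Lov\'asz local lemma already suffices. Color the vertices uniformly at random with $k$ colors; each facet is monochromatic with probability $k^{-d}$, and two facet events are independent unless the facets share a vertex, which bounds the dependency degree by $(d+1)(\Delta - 1)$. The symmetric LLL then succeeds as soon as $k \gtrsim \Delta^{1/d}$, and for odd $d$ we have $\lfloor d/2\rfloor = \lceil d/2\rceil - 1$, so $k = O(n^{(\lceil d/2\rceil - 1)/d})$ colors suffice. This already improves on Heise-Panagiotou-Pikhurko-Taraz by $n^{1/d}$ by replacing the crude dependency bound $D \leq m$ with the link-based $D \leq (d+1)\Delta$.

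For even $d$ the above analysis only yields $\chi = O(n^{1/2})$, matching Heise et al.\ rather than improving upon it. Here one needs an additional idea: the maximum vertex degree $\Delta = \Theta(n^{d/2})$ is of the same order as $m$, but the \emph{average} vertex degree is $(d+1)m/n = O(n^{d/2 - 1})$, a factor of $n$ smaller, and the desired improvement of $n^{1/d}$ should come from exploiting this gap. The natural plan is a two-phase coloring: identify the ``heavy'' vertices (those of degree above some threshold $T$) and give each its own color class, then apply LLL to the residual ``light'' subhypergraph whose maximum degree is at most $T$, using $O(T^{1/d})$ further colors.

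I expect the main obstacle to lie precisely in the even-$d$ bookkeeping. The naive averaging estimate $|V_{\text{heavy}}| \leq (d+1)m/T$, balanced against the LLL tail $O(T^{1/d})$, only optimizes to $\chi = O(n^{\lceil d/2\rceil/(d+1)})$, which is still strictly weaker than the claim $(\lceil d/2\rceil - 1)/d$ for every even $d$. Closing this residual gap will likely require either a sharper distributional control on vertex degrees in a simplicial $d$-sphere (via $g$-theorem or $h$-vector constraints on the link $f$-vectors), or the replacement of the LLL phase by an average-degree-sensitive technique such as the R\"odl semi-random nibble or an iterated Moser--Tardos style alteration. This is where I expect the ``different probabilistic approach'' mentioned by the authors to do the real work.
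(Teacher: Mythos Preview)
Your odd-$d$ argument via the local lemma with the link-based dependency bound $D \leq (d+1)\Delta$ and $\Delta = O(n^{\lfloor d/2\rfloor})$ is correct and yields exactly the claimed exponent $(\lceil d/2\rceil-1)/d$ in that parity; this is a genuinely different route from the paper for odd $d$.

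However, the proposal as a whole has a gap: you do not prove the theorem for even $d$, and you say so yourself. The heavy/light split only reaches exponent $\lceil d/2\rceil/(d+1)$, and the closing speculation about nibbles or Moser--Tardos does not locate the actual mechanism.

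The paper's argument is both simpler and uniform in parity, and the idea you are missing is that the upper bound theorem is \emph{hereditary}: every vertex-induced subhypergraph of a linearly $d$-embeddable $(d+1)$-uniform hypergraph is again linearly $d$-embeddable, so on $n_i$ vertices it still has at most $O(n_i^{\lceil d/2\rceil})$ hyperedges. The standard alteration bound (random sample at density $p$, delete one vertex per surviving hyperedge) then gives an independent set of size $\Omega(n_i^{\,t})$ with $t=(d+1-\lceil d/2\rceil)/d$, applied not once but at every stage. Greedily peel off a maximum independent set as a color class and repeat: from $n_{i+1}\leq n_i-\beta n_i^{\,t}$ an elementary iteration lemma shows termination in $O(n^{1-t})=O(n^{(\lceil d/2\rceil-1)/d})$ rounds. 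No local lemma, no heavy/light decomposition, no parity split; the whole ``different probabilistic approach'' is just alteration plus the hereditary use of the edge-count bound. Your LLL route for odd $d$ is a pleasant shortcut there, but it cannot be patched for even $d$ because the link of a vertex in a $d$-sphere with $d$ even is an odd-dimensional sphere and genuinely can have $\Theta(n^{d/2})$ facets, so $\Delta^{1/d}$ is stuck at $n^{1/2}$.
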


\medskip

This paper is organized as follows. In Section \ref{section_preliminaries}, we introduce relevant notations and basic concepts.
In Section \ref{section_basic_examples}, we consider simplicial spheres of special classes and investigate their transversal numbers and colorability. In Section \ref{section_plane_tight}, we give the construction for Theorem \ref{thm_plane_tight}. In Section \ref{section_non-2-colorable}, we present the proof of Theorem \ref{thm_trans} which makes use of the construction in Appendix  \ref{section_appendix} obtained by computer experiments. In Section \ref{section_upper_bound}, we prove Theorem \ref{thm_main_upper_bound} and generalize it for simplicial spheres (see Theorem \ref{thm_main_upper_bound_simplicial_sphere}).
Finally, we give remarks on future research in Section \ref{section_final_remark}.

\section{Preliminaries}\label{section_preliminaries}
A \textit{hypergraph} $\HH$ on a vertex set $V=V(\HH)$, is a family of subsets of $V$, which are called \textit{hyperedges}. We assume that the hyperedges of $\HH$ cover the whole $V(\HH)$, that is, $\bigcup_{f\in \HH} f= V(\HH)$. A hypergraph which is a subfamily of $\HH$ is called a \textit{subhypergraph} of $\HH$. $\HH$ is called \textit{$k$-uniform} if all hyperedges have the same size $k$.  For two hypergraphs $\HH_1$ and $\HH_2$, we define the \textit{join} operation $*$ as
$$\HH_1*\HH_2=\{f_1\cup f_2: f_1\in \HH_1, f_2\in \HH_2\}.$$

A \textit{simplicial complex} $\sK$ on a vertex set $V$ is a hypergraph on $V$,
which satisfies the hereditary property: If $g \subseteq f$ and $f \in \sK$, then $g \in \sK$. A subhypergraph of a simplicial complex $\sK$ which is again a simplicial complex is called a \textit{subcomplex} of $\sK$. We call an element of $\sK$ a \textit{face} of $\sK$. In particular, we call a face of $\sK$ of size 1 or 2 by a \textit{vertex} or an \textit{edge}, respectively. We do not particularly distinguish an element $v\in V(\sK)$ from a singleton $\{v\}$ if there is no confusion.  Maximal faces of $\sK$ with respect to set inclusion are called \textit{facets}. The \textit{dimension of a face} $f$ is $ \vert f \vert -1$, and the \textit{dimension of a simplicial complex} $\sK$ is the maximum dimension of a face in $\sK$.

\medskip

For a simplicial complex $\sK$, we can correspond a topological space to $\sK$ which is unique up to homeomorphism. We call the space the \textit{realization} of $\sK$ and denote it by $ \vert  \vert \sK \vert  \vert $. For example, the power set of a set $V$ of size $d+1$, which we denote by $2^V$, is a simplicial complex whose realization is homeomorphic to the regular $d$-dimensional simplex. To emphasize that it is a simplicial complex, we use a different notation $\Delta(V)$ to denote $2^V$ and call it the \textit{abstract simplex} on $V$, or simply just an abstract simplex. Furthermore, when $ \vert V \vert =d+1$, we call it an abstract $d$-simplex. We call $\sK$ a \textit{simplicial $d$-sphere} or a \textit{simplicial $(d+1)$-ball} if $ \vert  \vert \sK \vert  \vert $ is homeomorphic to the $d$-dimensional unit sphere or the $(d+1)$-dimensional unit ball, respectively.

A simplicial complex $\sK$ is a \textit{PL $d$-sphere} if it is \textit{PL homeomorphic} to the boundary of an abstract $(d+1)$-simplex. Similarly, a simplicial complex $\sB$ is a \textit{PL $(d+1)$-ball} if $\sB$ is \textit{PL homeomorphic} to an abstract $(d+1)$-simplex. PL spheres and PL balls have nice properties so that they match better with our intuition than general simplicial spheres and balls. One of the properties is that the \textit{link} of a face $f$ of a PL $(d+1)$-ball $\sB$ defined as 
$$\lk_\sB(f)=\{g\in \sB: g \cup f \in \sB, g \cap f=\emptyset \} $$
is either a PL ball or a PL sphere. In the former case, we call $f$ a \textit{boundary face} of $\sB$, and in the latter case, $f$ is called an \textit{interior face} of $\sB$. The \textit{boundary complex} of a PL ball $\sB$, denoted by $\partial\sB$, is the subcomplex of $\sB$ which consists of all boundary faces of $\sB$. For details of PL topology, see \cite{piecewise_linear_hudson}.

\medskip 

Given a hypergraph $\HH$, we can correspond the simplicial complex $\sK(\HH)$ to $\HH$ by
$$\sK(\HH)=\bigcup_{f\in \HH}\Delta(f).$$
We can also correspond the \textit{facet hypergraph} $\FF(\sK)$, which consists of all facets of $\sK$, to a simplicial complex $\sK$.

A \textit{simplicial polytope} is a polytope whose faces are simplices. For a polytope $\mathbf{P}$, we can easily correspond the facet hypergraph $\FF(\mathbf{P})$ to $\mathbf{P}$ where each hyperedge is the set of all vertices of a facet $f$ of $\mathbf{P}$. Especially when $\mathbf{P}$ is simplicial, the simplicial complex $\sK(\mathbf{P})=\sK(\FF(\mathbf{P}))$, which we call the \textit{boundary complex} of $\mathbf{P}$, is a PL sphere, and this is our primary information we consider about a polytope.

For simple notations and conventions, when we consider the colorability and  transversals of $\FF(\sK)$ where $\sK$ is a simplicial complex, we directly refer to $\sK$ rather than $\FF(\sK)$ if there is no confusion. In the same manner, we directly refer to a polytope $\mathbf{P}$ for colorability and transversals, rather than referring to the corresponding simplicial complex $\sK(\mathbf{P})$ or the facet hypergraph $\FF(\mathbf{P})$.

\section{The transversal numbers and colorability for some special classes of simplicial spheres}\label{section_basic_examples}
In this section, we consider simplicial spheres of some special classes and investigate their transversal numbers and colorability. In particular, we show that every simplicial sphere considered here, except for odd cycles, is 2-colorable.

\subsection{2-dimensional simplicial spheres}\label{subsection_basic_3-polytope} We first consider the chromatic number of a simplicial 2-sphere.

\begin{prop}\label{prop_basic_3-polytope}
Simplicial 2-spheres are 2-colorable.
\end{prop}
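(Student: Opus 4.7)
The plan is to reduce the proposition to the Four Color Theorem. The $1$-skeleton of a simplicial $2$-sphere $\sK$ is a triangulation of $S^2$, hence a planar graph, so by the Four Color Theorem it admits a proper vertex $4$-coloring $c \colon V(\sK) \to \{1,2,3,4\}$. Every facet $\{a,b,c\}$ of $\sK$ forms a $K_3$ in the $1$-skeleton, so $c(a), c(b), c(c)$ are pairwise distinct.

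I would then coarsen the palette via the map $\pi$ sending $\{1,2\} \mapsto \mathrm{R}$ and $\{3,4\} \mapsto \mathrm{B}$. For any facet $\{a,b,c\}$, the triple $\{c(a), c(b), c(c)\}$ is a $3$-element subset of $\{1,2,3,4\}$ and therefore cannot be contained in $\{1,2\}$ nor in $\{3,4\}$. Hence the facet meets both color classes of $\pi \circ c$, so $\pi \circ c$ is a proper $2$-coloring of $\FF(\sK)$, which is exactly the claim.

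If one wished to avoid invoking the Four Color Theorem, a natural alternative would be an induction on $|V(\sK)|$. By Euler's formula, $\sK$ has a vertex $v$ of degree at most $5$; one would delete $v$, retriangulate the disk previously filled by its star, $2$-color the resulting smaller simplicial $2$-sphere by the inductive hypothesis, and reinsert $v$ with a color chosen to avoid monochromatic facets through $v$. The degree-$3$ case is essentially immediate, since the link of $v$ is already a single triangle and a short case check on the colors of its vertices gives a legal color for $v$. The obstacle in this elementary route is the degree-$4$ and degree-$5$ cases: one must choose a diagonal (or pair of diagonals) for the retriangulation of the link cycle whose chords do not already appear as non-star edges of $\sK$ (else facets would be duplicated), and at the same time arrange that some color of $v$ is consistent with every induced triangle of the inductive coloring. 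Because the Four Color Theorem gives the statement in two lines, that is the route I would take.
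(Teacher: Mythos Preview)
Your proof is correct and follows essentially the same approach as the paper: both invoke the Four Color Theorem on the planar $1$-skeleton and then merge the four color classes into two pairs to obtain a proper $2$-coloring of $\FF(\sK)$. The additional discussion of an inductive alternative is not needed for the statement.
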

\begin{proof}
Let $\sK$ be a simplicial 2-sphere. Since the graph $G=G(\sK)$ of $\sK$ is planar, by the four color theorem \cite{4-color-thm}, we have a proper coloring of $G$ which has color classes $V_1$, $V_2$, $V_3$ and $V_4$. For every face $f$ of $\sK$, the vertices of $f$ attain distinct colors from the coloring of $G$, since $f$ is a triangle. This implies that the vertex sets $W_1=V_1 \cup V_2$ and $W_2=V_3 \cup V_4$ are independent sets in the facet hypergraph $\FF(\sK)$, which form a 2-coloring of $\FF(\sK)$. 
\end{proof}
In the literature, Proposition \ref{prop_basic_3-polytope} is well-known and has many different proofs, see \cite[Section 2]{coloring_surface_jctb} for details. Note that Proposition \ref{prop_basic_3-polytope} does not lose generality when it is stated for simplicial 3-polytopes since every simplicial 2-sphere can be realized as the boundary of a 3-polytope, which is an easy consequence from Steinitz' theorem \cite{steinitz_thm} and Whitney's theorem \cite{whitney_thm}. 

\medskip 

Proposition \ref{prop_basic_3-polytope} implies that $\tau(\sK)\leq n/2$ where $\sK$ is a simplicial 2-sphere and $n$ is the number of vertices of $\sK$. We show that this bound is tight in Section \ref{section_plane_tight}.  

\subsection{Stacked balls and spheres}\label{subsection_basic_stacked}

A simplicial complex $\sB$ is called a \textit{stacked $(d+1)$-ball} if there are sequences of abstract $(d+1)$-simplices $\Delta_1, \dots, \Delta_m$ such that, with a  notation $\sB_k=\bigcup_{i=1}^k\Delta_i$, the intersection  $\sB_k\cap \Delta_{k+1}$ is an abstract
$d$-simplex whose maximal face is a $d$-dimensional face of both $\partial \sB_k$ and $\Delta_{k+1}$ for every $1\leq k \leq m-1$. A simplicial complex $\sK$ is called a \textit{stacked $d$-sphere} if $\sK=\partial \sB$ for some stacked $(d+1)$-ball $\sB$.

Stacked $(d+1)$-balls and stacked $d$-spheres form fundamental classes of simplicial complexes. They can be realized by polytopes, which are called \textit{stacked $(d+1)$-polytopes}. This in particular implies that they are PL $(d+1)$-balls and PL $d$-spheres, respectively. Stacked balls and spheres are closely related to neighborly spheres, see Section \ref{subsection_basic_single_element_ext} below.

\medskip 

The following is an easy observation.

\begin{prop}\label{prop_basic_stacked}
Let $d\geq 2$. For a given stacked $(d+1)$-ball $\sB$ on $n$ vertices and $\sK=\partial \sB$, we have $\tau(\sB)\leq \frac{1}{d+2}n$ and $\tau(\sK)\leq \frac{2}{d+2}n$. Furthermore, $\sB$ and $\sK$ are both 2-colorable.
\end{prop}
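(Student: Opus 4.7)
The plan is to reduce all four claims of the proposition to a single structural fact: every stacked $(d+1)$-ball admits a proper vertex $(d+2)$-coloring in which every facet is \emph{rainbow}, i.e., every facet carries each of the $d+2$ colors exactly once.

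To construct the rainbow coloring, I would induct on the number of simplices $m$. For $m=1$, color the $d+2$ vertices of $\Delta_1$ bijectively with $\{1,\dots,d+2\}$. For the inductive step on $\sB_k \to \sB_{k+1} = \sB_k \cup \Delta_{k+1}$, the gluing face $g = \sB_k \cap \Delta_{k+1}$ is a $d$-face of $\partial \sB_k$, so it is contained in a unique facet $\Delta'$ of $\sB_k$; by the inductive hypothesis $\Delta'$ is rainbow, so the $d+1$ vertices of $g$ carry $d+1$ distinct colors, missing exactly one color $c^\star$. Assigning $c^\star$ to the new vertex $v_{k+1}$ makes $\Delta_{k+1} = g \cup \{v_{k+1}\}$ rainbow while leaving earlier facets untouched.

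All four conclusions then follow by easy counting on the color classes $C_1,\dots,C_{d+2}$. Since each facet of $\sB$ contains exactly one vertex of each class, every class is by itself a transversal of $\sB$, so $\tau(\sB) \le \min_i |C_i| \le n/(d+2)$. Each facet of $\sK = \partial \sB$ is a $d$-face $g$ contained in a unique facet of $\sB$, so $g$ carries exactly $d+1$ colors, missing one; consequently the union of any two distinct classes $C_i \cup C_j$ meets every facet of $\sK$, and a short averaging argument (using $|C_1| \le n/(d+2)$ and $|C_2| \le (n - |C_1|)/(d+1)$) shows the two smallest classes together have at most $2n/(d+2)$ vertices, giving $\tau(\sK) \le 2n/(d+2)$. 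For 2-colorability, merge the classes along any partition $[d+2] = A \sqcup B$ with $A, B \ne \emptyset$: every facet of $\sB$ uses all $d+2$ colors, hence meets both sides, so $\sB$ is 2-colorable; for $\sK$, a facet misses one color, so one needs both $|A|, |B| \ge 2$, which the hypothesis $d \ge 2$ allows via the split $A = \{1,2\}$, $B = \{3,\dots,d+2\}$.

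The only step that requires any care is the inductive rainbow construction, and even there the sole ingredient is that a $d$-face on $\partial \sB_k$ lies in exactly one facet of $\sB_k$, which is immediate from the definition of a stacked ball. Everything after is pigeonhole on at most $d+2$ classes summing to $n$.
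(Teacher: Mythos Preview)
Your proof is correct and follows essentially the same approach as the paper: both construct a proper $(d+2)$-coloring of the graph of $\sB$ by induction along the stacking sequence (so that every facet is rainbow), then take the smallest one or two color classes as transversals of $\sB$ and $\sK$ respectively, and obtain 2-colorability by bipartitioning the color classes. Your write-up spells out the inductive extension and the averaging bound $|C_1|+|C_2|\le 2n/(d+2)$ more explicitly, but the underlying argument is identical.
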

\begin{proof}
By the inductive construction of $\sB$, one can easily see that the graph of $\sB$, or equivalently the graph of $\sK$, is $(d+2)$-colorable. Since every facet of $\sB$ attains all $d+2$ colors from the coloring, choosing a color class of the minimum size gives a transversal of size at most $n/(d+2)$ for $\sB$. For a transversal of $\sK$, we choose the two smallest color classes. Then the union of the two color classes is a transversal of size at most $2n/(d+2)$ for $\sK$.
For both $\sB$ and $\sK$, the transversals we found and their complements form proper 2-colorings. 
\end{proof}

The following example gives the largest transversal ratio we know for general stacked balls and spheres.  

\begin{eg}\label{eg_stacked_lower}
Let $\sB^{d+1}_n$ be a stacked $(d+1)$-ball which has the facet hypergraph
$$\FF(\sB^{d+1}_n)=\{ \{1,2,\dots, d+2\},\{2,3,\dots, d+3\}, \dots, \{n-d-1, n-d,\dots  ,n\}\}.$$
When $n=(d+2)k$ for some $k \geq 1$, $\sB^{d+1}_n$ has the transversal number exactly $\frac{1}{d+2}n$. Since we already obtained the upper bound in Proposition \ref{prop_basic_stacked}, we only need to show that a transversal of $\sB^{d+1}_n$ has size at least $\frac{1}{d+2}n$. Note that there are $n-d-1=(d+2)(k-1)+1$ facets in $\sB^{d+1}_n$, and each vertex of $\sB^{d+1}_n$ can only traverse at most $(d+2)$ facets. Therefore, a transversal of $\sB^{d+1}_n$ requires at least 
$$\geq \left\lceil\frac{(d+2)(k-1)+1}{d+2}\right\rceil=k=\frac{n}{d+2}$$
vertices.

Now, let $\sK^d_n=\partial \sB^{d+1}_n$. We show that the stacked $d$-sphere $\sK^d_n$ has the transversal number exactly $\frac{2}{d+3}n$ when $n=(d+3)k$ for some $k\geq 1$. Note that $\sK^d_n$ has the facet hypergraph
$$\FF(\sK^d_n)=\{g\setminus \{w\} :g\in \FF(\sB^{d+1}_n), w\notin \{\max(g), \min(g)\} \}
\cup \{\{1,\dots, d+1\}, \{n-d,\dots, n\}\}.$$

We partition the vertex set $[n]$ into the subsets
$$\{1, 2,\dots, d+3\},\{d+4, d+5,\dots, 2(d+3)\},\dots, \{(k-1)(d+3)+1, \dots, k(d+3)\}.$$
Let us call the subsets $V_1, \dots, V_k$ in order. We show that the set $T$ which consists of all maximums and minimums of $V_1, \dots, V_k$ is a transversal of $\sK^d_n$. Suppose otherwise that there is a facet $f$ of $\sK^d_n$ which is not traversed by $T$. If $f$ is contained in $V_i$ for some $i\in [k]$, then $f$ should miss $\max(V_i)$ and $\min(V_i)$. Then, $f$ should consist of the middle elements of $V_i$ which is impossible. So $f$ needs to intersect two subsets $V_i$ and $V_{i+1}$ for some $i\in [k-1]$. Then, $f$ should miss two consecutive elements, which is also impossible. This shows that $\tau(\sK^d_n)\leq \frac{2}{d+3}n$.

We can also easily see that the induced subhypergraph $\FF(\sK^d_n)[V_i]$ requires at least two vertices to traverse all facets in the subhypergraph for each $i\in [k]$. Together with the upper bound, this shows that $\tau(\sK^d_n)= \frac{2}{d+3}n$.\qed 
\end{eg}
Example \ref{eg_stacked_lower} shows that the upper bound for stacked $(d+1)$-balls in Proposition \ref{prop_basic_stacked} is tight. However for stacked $d$-spheres, it only shows that the exact maximum transversal ratio is in somewhere between $\frac{2}{d+3}$ and $\frac{2}{d+2}$.

\begin{Q} \label{Question_stacked_sphere}
What is the following value
$$\sigma_d:=\limsup_{n \to \infty} \max\{\tau(\sK)/n: \textrm{$\sK$ is a stacked $(d-1)$-sphere on $n$ vertices}\}$$
for a fixed dimension $d$?
\end{Q}

\begin{remark}
Regarding Question \ref{Question_stacked_sphere}, the following two results were recently obtained by Minho Cho and Jinha Kim, as well as general results for higher dimensions \cite{cho2023transversal}. (i) They showed that $\sigma_3\geq 6/13$. (ii) Define 
\begin{align*}&\sigma_d^\textrm{path}
:=\\&\limsup_{n \to \infty} \max\{\tau(\partial\sB)/n: \textrm{$\sB$ is a stacked $d$-ball on $n$ vertices, $G^*(\sB)$ is a path}\}.
\end{align*}
They obtained $\sigma_3^\textrm{path}=3/7$. (In the definition of $\sigma_d^\textrm{path}$, $G^*(\sB)$ is the \textit{facet-ridge graph} of $\sB$, where vertices are facets of $\sB$ and two facets of $\sB$ are adjacent in $G^*(\sB)$ if and only if they share a codimension 1 face.)  The former result is based on computer experiments while the latter has a hand-written proof. 
\end{remark}

\subsection{Cyclic polytopes}\label{subsection_basic_cyclic}
The \textit{moment curve} $\gamma_d: \RR\rightarrow \RR^d$ in $\RR^d$ is defined as
$$ \gamma_d(t)=(t,t^2,\dots, t^d)\in \RR^d.$$
Given $n$ real numbers $t_1<t_2< \cdots<t_n$ where $n\geq d+1$, we define the \textit{cyclic polytope} $C_d(t_1, \dots, t_n)$ as $$C_d(t_1, \dots, t_n)=\conv\{\gamma_d(t_1),\dots, \gamma_d(t_n)\}.$$

In fact, the following criterion gives a full combinatorial description of $C_d(t_1, \dots, t_n)$. In particular, it shows that the combinatorial type of $C_d(t_1, \dots, t_n)$ is independent from the choice of real parameters $t_1, \dots, t_n$. By this, we use a simpler notation $C_d(n)$ and call it the \textit{cyclic $d$-polytope on $n$ vertices}. Furthermore, we label the vertices $\gamma_d(t_1),\dots, \gamma_d(t_n)$ of $C_d(n)$ by $1, \dots, n$ in order.

\begin{thm}[Gale's evenness criterion] \label{thm_gale}
$C_d(n)$ is a simplicial polytope. Furthermore, a $d$-subset $f$ of $[n]=\{1,\dots, n\}$ forms a facet of $C_d(n)$ if and only if  the set $\{k\in f:i<k<j \}$ has even size for every $i, j \notin f$.
\end{thm}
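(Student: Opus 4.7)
The plan is to prove both parts of Gale's evenness criterion by analyzing how the moment curve interacts with hyperplanes, using the fact that a polynomial of degree $d$ vanishes on at most $d$ points.

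First, I would verify that $C_d(n)$ is simplicial. The key input is the Vandermonde determinant: for any $d+1$ parameters $s_0 < s_1 < \cdots < s_d$, the matrix with rows $(1, s_k, s_k^2, \ldots, s_k^d)$ has nonzero determinant. This says that any $d+1$ points on the moment curve are affinely independent, so no $d+1$ vertices of $C_d(n)$ lie on a common hyperplane. Consequently each facet, being the convex hull of the vertices of $C_d(n)$ lying on some supporting hyperplane, has exactly $d$ vertices, so $C_d(n)$ is simplicial.

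Next, fix a $d$-subset $f \subseteq [n]$, and let $H_f$ be the unique affine hyperplane containing $\{\gamma_d(t_i) : i \in f\}$ (it exists and is unique by the simplicial property). The hyperplane has an equation $a_0 + a_1 x_1 + \cdots + a_d x_d = 0$, and pulling back along $\gamma_d$ gives a univariate polynomial
\[
P(t) = a_0 + a_1 t + a_2 t^2 + \cdots + a_d t^d
\]
of degree at most $d$ that vanishes exactly at $\{t_i : i \in f\}$. Since $P$ has $d$ distinct roots, it has degree exactly $d$, and
\[
P(t) = a_d \prod_{i \in f}(t - t_i).
\]
The subset $f$ is a facet of $C_d(n)$ if and only if $H_f$ is a supporting hyperplane, i.e., if and only if $P(t_k)$ has the same sign for every $k \in [n]\setminus f$.

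The remaining step is pure sign bookkeeping. For $k \notin f$, the sign of $\prod_{i \in f}(t_k - t_i)$ equals $(-1)^{m_k}$, where
\[
m_k = |\{i \in f : i > k\}|.
\]
For $i < j$ with $i, j \notin f$, a direct count gives
\[
m_i - m_j = |\{k \in f : i < k \le j\}| = |\{k \in f : i < k < j\}|,
\]
using that $j \notin f$. Therefore the signs of $P(t_i)$ and $P(t_j)$ agree if and only if $|\{k \in f : i < k < j\}|$ is even. Applying this to all pairs $i, j \notin f$ yields the stated criterion.

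The only step that requires real care is the final sign computation, in particular getting the indices right when converting between "number of elements of $f$ above $k$" and "number of elements of $f$ strictly between $i$ and $j$"; the rest is essentially a reformulation of the fact that a real polynomial changes sign exactly at its simple real roots. I would also note that the argument implicitly handles the case when $i$ or $j$ is outside the interval $[\min f, \max f]$, since then $m_i - m_j$ is still the size of $f \cap (i,j)$, which is consistent with the formula.
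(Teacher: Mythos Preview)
The paper does not actually prove Theorem~\ref{thm_gale}; Gale's evenness criterion is quoted as a classical fact and then used as a tool. So there is no ``paper's own proof'' to compare against. Your argument is correct and is essentially the standard proof (see, e.g., Ziegler's \emph{Lectures on Polytopes}): the Vandermonde determinant gives simpliciality, and the polynomial $P(t)=a_d\prod_{i\in f}(t-t_i)$ obtained by pulling back the facet hyperplane along $\gamma_d$ reduces the supporting condition to the parity computation you carry out. The sign bookkeeping via $m_k=|\{i\in f:i>k\}|$ and $m_i-m_j=|\{k\in f:i<k<j\}|$ is handled correctly.
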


Using Gale's criterion, one can obtain the following fact.

\begin{prop}\label{prop_basic_cyclic_col}
$C_d(n)$ is 2-colorable for every $d\geq 3$. Furthermore, 
$$\tau(C_d(n))=\left\lceil \frac{n-d}{2} \right\rceil+1$$
when $d$ is even, and $\tau(C_d(n))=2$ when $d$ is odd.
\end{prop}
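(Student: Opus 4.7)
The central tool throughout is Gale's evenness criterion, which I will apply in its equivalent ``run'' form: writing each $d$-subset $f\subseteq[n]$ as a $0/1$-string of length $n$ with $1$'s at positions of $f$, the set $f$ is a facet if and only if every maximal run of $1$'s that is strictly sandwiched between two $0$'s has even length.

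I first handle $2$-colorability and the $d$ odd case, both of which are short. To $2$-color $C_d(n)$ for $d\geq 3$, I propose coloring $[n]$ by parity. If a facet $f$ were all-odd, then the neighbors of every $1$-position are even and hence $0$, so every $1$-run has length exactly $1$; with $|f|=d\geq 3$ at least one such singleton run must be internal and violates Gale. The all-even case is symmetric. For $d$ odd, suppose $1,n\notin f$ for some facet $f$; applying Gale with $i=1,j=n$ forces $|f|=d$ to be even, a contradiction, so $\{1,n\}$ is a transversal and $\tau\leq 2$. For the matching lower bound, the facets $\{1,\dots,d\}$ and $\{n-d+1,\dots,n\}$ (both readily verified via Gale) show that no single vertex meets every facet, with the simplex case $n=d+1$ checked directly.

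For the upper bound when $d$ is even, I set $m=\lceil(n-d)/2\rceil+1$ and take $T=\{1,3,\ldots,2m-1\}$, so $|T|=m$. If some facet $f$ were disjoint from $T$, then every even vertex $2k\in f$ with $1\leq k\leq m-1$ would be a singleton $1$-run between the $0$'s at the neighboring positions $2k-1,2k+1\in T$, violating Gale. Hence $f\subseteq\{2m,2m+1,\ldots,n\}$, a set of size $n-2m+1\leq d-1$, which cannot contain any $d$-subset.

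The main obstacle is the matching lower bound for $d$ even. I partition $[n]$ into blocks $B_i=\{2i-1,2i\}$ for $i=1,\ldots,\lfloor n/2\rfloor$, leaving $\{n\}$ as a singleton when $n$ is odd. A direct Gale check shows that every union of $d/2$ such blocks is a facet, since each internal $1$-run is either a single block (length $2$) or a merged stretch of consecutive blocks (even length). Consequently any transversal meets at least $\lfloor n/2\rfloor-(d/2-1)$ blocks, which for $n$ even already gives $|T|\geq(n-d)/2+1$. When $n$ is odd this bound is one short of the target, so I must use supplementary facets of the form $\{1,n\}\cup B_{i_1}\cup\cdots\cup B_{i_{d/2-1}}$ (a Gale check verifies these are facets, thanks to the external runs carried by the positions $1$ and $n$), together with the ``corner'' facet $\{1,2,n-1,n\}\cup B_{i_1}\cup\cdots\cup B_{i_{d/2-2}}$ when $d\geq 4$. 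A case split on $T\cap\{1,2,n-1,n\}$ then forces the extra unit, with the delicate subcase $1,2,n-1,n\notin T$ handled by combining the standard block partition with its shift $B'_j=\{2j,2j+1\}$ to exhibit a specific block-pair facet disjoint from $T$. This final case analysis is the only technical book-keeping in the proof.
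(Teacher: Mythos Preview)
Your treatment of $2$-colorability and the odd-$d$ case matches the paper's essentially verbatim, and your upper-bound transversal $T=\{1,3,\dots,2m-1\}$ for even $d$ is a clean variant of the paper's alternating construction. The divergence is in the even-$d$ lower bound.

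For $n$ even your block argument is a nice alternative to the paper's, but for $n$ odd the sketched case analysis does not close when $d\geq 6$. Concretely, take $d=8$, $n=13$, and $T=\{4,7,10\}$, so $|T|=(n-d+1)/2=3$ and $1,2,n-1,n\notin T$. One checks that $T$ meets every facet in each of the families you invoke: unions of four $B$-blocks, unions of four $B'$-blocks, the $\{1,n\}$-augmented block facets, and the corner facets $\{1,2,12,13\}\cup B_i\cup B_j$ (and their shifted analogues). Yet $T$ is not a transversal, since the facet $\{1,2,5,6,8,9,11,12\}$ avoids it. So ``combining the two block partitions'' does not by itself exhibit a missed facet in the delicate subcase; what is really needed is an iterative peeling argument (first force $B_{(n-1)/2}$ and $B'_1$ to be missed, then $B_{(n-3)/2}$ and $B'_2$, and so on for $d/2-2$ rounds), which you have not supplied and which becomes progressively more intricate as $d$ grows.

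The paper sidesteps all of this with a single uniform argument: if $|T|=\lceil(n-d)/2\rceil$, the complement $R=[n]\setminus T$ splits into at most $|T|-1$ \emph{inner} intervals (those not containing $1$ or $n$), and since $|R|\geq (n+d-1)/2$ one can select from $R$ a $d$-element subset $f$ whose intersection with each inner interval is an even-length run of consecutive vertices, which is then a facet by Gale. This handles both parities of $n$ at once and avoids any case distinction. Your block approach can presumably be rescued, but the paper's construction-in-the-complement argument is the cleaner route here.
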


\begin{proof}
We can obtain a proper 2-coloring of $C_d(n)$ by alternating colors of vertices as we move from the vertex 1 to the vertex $n$. We can also easily see that when $d$ is odd, $\tau(C_d(n))=2$ and $\{1,n\}$ is a transversal of $C_d(n)$ of the minimum size.

\medskip 

We now compute the transversal number when $d$ is even. We first show that there are no transversals of size $\lceil\frac{n-d}{2} \rceil$. Suppose otherwise that there is such a transversal $T$. Then we have $ \vert T \vert \leq \frac{n-(d-1)}{2}$. We will lead a contradiction by finding a facet $f$ of $C_d(n)$ which is contained in the complement $R=[n]\setminus T$ and has size at least $\frac{n+(d-1)}{2}$.

Note that $T$ divides the vertices of $R$ into ``intervals" of consecutive vertices. Among those intervals, there are at most $\frac{n-(d-1)}{2}-1$ ``inner" intervals which do not have the ending vertices 1 and $n$. We can choose a subset $g\subseteq R$ of the maximum size such that for each inner interval $I$ the intersection $g \cap I$ consists of an even number of consecutive vertices. Note that $g$ only misses at most 1 vertex at each inner interval. Therefore, $g$ has the size at least
$$\frac{n+(d-1)}{2}-\left(\frac{n-(d-1)}{2}-1\right)=d.$$
We can choose a subset $f\subseteq g$ so that $ \vert f \vert =d$, and for each inner interval $I$, $f\cap I$ consists of an even number of consecutive vertices. By Gale's criterion, $f$ forms a facet which leads to a contradiction.

\medskip 

Finally, we find a transversal of size $\lceil \frac{n-d}{2}\rceil+1$. Let $I_1$ and $I_n$ be some intervals of consecutive vertices which contain 1 and $n$ resepectively. We further assume that $ \vert I_1 \vert + \vert I_n \vert =d-1$. We alternatingly choose vertices for a transversal, starting from the vertex right after $I_1$ before we arrive at $I_n$. More precisely, when $n-(d-1)$ is odd, we choose a transversal as
$$ I_1 \; *\; - \; * \; -\; \dots\;*\; -\; * \;  I_n,$$
where ``$*$" denotes a vertex chosen for a transversal, and ``$-$" denotes one of the other vertices. Using the same notation, when 
$n-(d-1)$ is even, we choose a transveral as
$$ I_1 \; *\; - \; * \; -\; \dots\;*\; -\; * \; *\; I_n.$$
This completes the proof.
\end{proof}

In particular, the fact $\tau(C_d(n)) > \lceil \frac{n-d}{2}\rceil$ when $d$ is even is already observed in \cite{holmsen_surrounding}. This fact is equivalent to saying that $C_d(n)$ satisfies the dual surrounding property $S^*(\lceil \frac{n-d}{2}\rceil)$. Using the Gale transform, this implies there exist arbitrarily large point sets in $\RR^{\overline{d}}$ in general position with respect to the origin with property $S(\lfloor \frac{\overline{d}}{2} \rfloor +1)$ in the ``primal" space, see \cite[Theorem 4]{holmsen_surrounding}.

\subsection{Neighborly spheres obtained from a cyclic polytope by one single element extension} \label{subsection_basic_single_element_ext}
A simplicial complex $\sK$ on the vertex set $V$ is called \textit{$k$-neighborly} if every $k$-subset of $V$ is a face of $\sK$. A simplicial $d$-sphere is called a \textit{neighborly $d$-sphere} if it is $\lfloor(d+1)/2\rfloor$-neighborly. Using Gale's evenness criterion \ref{thm_gale}, one can show that a cyclic polytope is a \textit{neighborly polytope}, that is, $\sK(C_d(n))$ is a neighborly sphere for all positive integers $d$ and $n$ with $n\geq d+1$.

Neighborly spheres form an interesting class of simplicial spheres because they satisfy extremal properties. For example, the upper bound theorem \cite{upper_bound_mcmullen,upper_bound_stanley} asserts that a neighborly sphere attains the maximum number of faces.

\medskip 

There is a well-known inductive method which given a neighborly sphere constructs a new neighborly sphere with an additional vertex.
To do that we need a more general concept than the stackedness we defined at Section \ref{subsection_basic_stacked}. We call a PL $d$-ball $\sB$ \textit{$k$-stacked} if all faces of $\sB$ of dimension at most $d-k-1$ are in $\partial \sB$. Note that $\sB$ is 1-stacked if and only if $\sB$ is stacked. 

\begin{prop}[Lemma 4.1 in \cite{novik_neighborly}] \label{prop_single_element_ext-gen}
Let $\sK$ be a neighborly PL $d$-sphere on $V$. Let $\sB$ be a $(\lfloor (d+1)/2\rfloor-1)$-neighborly (with respect to $V$) and $(\lfloor (d+1)/2\rfloor-1)$-stacked PL $d$-ball which is contained in $\sK$ as a subcomplex. Then for a new vertex $w\notin V$, the new simplicial complex $$(\sK\setminus \sB)\cup (\partial \sB* \{\emptyset, \{w\}\})$$ is a neighborly PL $d$-sphere.
\end{prop}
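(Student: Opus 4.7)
Set $k := \lfloor (d+1)/2 \rfloor$ and let $\sK' := (\sK\setminus \sB)\cup(\partial \sB * \{\emptyset,\{w\}\})$ denote the candidate complex. The plan is to verify two things about $\sK'$: (i) it is a PL $d$-sphere, and (ii) it is $k$-neighborly on the vertex set $V\cup\{w\}$.

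For (i), I would first pin down $\sK\setminus \sB$ to mean the subcomplex of $\sK$ generated by those facets not belonging to $\sB$, i.e.\ the closure of the set-theoretic complement. A PL Schoenflies-type argument then shows that $\sK\setminus \sB$ is itself a PL $d$-ball with boundary $\partial \sB$, since $\sB$ is a PL $d$-ball sitting inside the PL $d$-sphere $\sK$. The join $\partial \sB * \{\emptyset,\{w\}\}$ is the cone over $\partial \sB$ with apex $w$, hence also a PL $d$-ball bounded by $\partial \sB$. Gluing two PL $d$-balls along a common boundary PL $(d-1)$-sphere yields a PL $d$-sphere, which gives (i).

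For (ii), I take any $k$-subset $F\subseteq V\cup\{w\}$ and show $F\in \sK'$ by casework on whether $w\in F$. If $w\notin F$, then $F\subseteq V$ is a face of $\sK$ by $k$-neighborliness of $\sK$; if $F\notin \sB$ then $F$ already lies in $\sK\setminus \sB$, and if $F\in \sB$ then $\dim F = k-1 \leq d-k$ (using $2k\leq d+1$), so the $(k-1)$-stackedness of $\sB$ places $F$ in $\partial \sB\subseteq \sK'$. If $w\in F$, write $F = F'\cup\{w\}$ with $|F'| = k-1$; the $(k-1)$-neighborliness of $\sB$ gives $F'\in \sB$, and since $\dim F' = k-2 \leq d-k$ (using $2k\leq d+2$), the $(k-1)$-stackedness again forces $F'\in \partial \sB$, whence $F\in \partial \sB * \{\emptyset,\{w\}\}\subseteq \sK'$.

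The delicate point is really (i): one must justify that the closure of the complement of a PL $d$-ball inside a PL $d$-sphere is again a PL $d$-ball with matching boundary, which is where the \emph{PL} hypothesis on $\sK$ (rather than a bare simplicial sphere) is essential. Step (ii), by contrast, is a short bookkeeping argument whose arithmetic hinges on the two inequalities $2k\leq d+1$ and $2k\leq d+2$, which are precisely the stackedness dimension bound for $k = \lfloor(d+1)/2\rfloor$; this tight matching is exactly why the hypotheses of the proposition are phrased with $(\lfloor(d+1)/2\rfloor-1)$ in both places.
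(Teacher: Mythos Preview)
The paper does not prove this proposition; it is quoted verbatim as Lemma~4.1 of \cite{novik_neighborly} and used as a black box, so there is no in-paper argument to compare against.

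Your proof is correct and is essentially the standard one. For part~(i), the needed fact---that the subcomplex of a PL $d$-sphere generated by the facets outside a PL $d$-ball subcomplex is again a PL $d$-ball with the same boundary---is a form of Newman's theorem; the paper itself invokes the special case of removing a single facet in the proof of Lemma~\ref{lemma_connected_sum_sphere}, citing \cite[Theorem~4.7.21]{om_book}, and the general case is in the same circle of PL results (e.g.\ \cite{piecewise_linear_hudson}). For part~(ii), your casework and the inequalities $2k\le d+1$ and $2k\le d+2$ are exactly right. One small point worth making explicit in the first case: when $F\notin\sB$, the reason $F$ lies in the subcomplex generated by facets of $\sK$ not in $\sB$ is that $\sK$ is pure and $\sB$ is downward closed, so if every $d$-facet of $\sK$ containing $F$ were in $\sB$ then $F$ itself would be in $\sB$.
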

Also, check \cite[Theorem 1]{nine_vertices} for the converse for neighborly $4$-polytopes. We call the process described in Proposition \ref{prop_single_element_ext-gen} a \textit{single element extension}.

\medskip

In the literature, it seems that enumerating all neighborly $d$-spheres is a very challenging task even for small size already in dimension $d=3$, for example, see \cite{nine_vertices, nine_vertices_sphere}. Hence, it looks difficult to obtain full combinatorial characterizations of neighborly spheres and colorability results of neighborly spheres from them. However, it is still possible to obtain the results for spheres obtained by applying just one single element extension to a cyclic polytope. The resulting neighborly sphere shares many faces with the original cyclic polytope, and this is the key property to prove the following proposition.

\begin{prop}
Let $d\geq 2$. Let $\sK$ be a neighborly $d$-sphere which is obtained from a cyclic polytope $C_{d+1}(n)$ by applying one single element extension which adds a new vertex $w$. Then $\sK$ is 2-colorable.
\end{prop}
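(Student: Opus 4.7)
For $d = 2$ the claim follows immediately from Proposition~\ref{prop_basic_3-polytope}, so I focus on $d \geq 3$. My plan is to start with the parity $2$-coloring $c(i) = i \bmod 2$ on $V = \{1, \ldots, n\}$, which by Proposition~\ref{prop_basic_cyclic_col} is a proper $2$-coloring of $\sK(C_{d+1}(n))$, and to extend it to $\sK = (\sK(C_{d+1}(n)) \setminus \sB) \cup (\partial \sB * \{\emptyset, \{w\}\})$ by setting $c(w) = 2$. The facets of $\sK$ split into facets of $C_{d+1}(n)$ not lying in $\sB$, which remain properly colored, and new facets of the form $\{w\} \cup g$ with $g \in \FF(\partial \sB)$. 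Since $c(w) = 2$, such a new facet is properly colored precisely when $g$ contains at least one odd-labeled vertex. But $g$ is a $(d-1)$-dimensional face of $\sK(C_{d+1}(n))$, i.e., a ridge of $C_{d+1}(n)$, and so is contained in some facet of $C_{d+1}(n)$. The plan therefore reduces to showing that every facet of $C_{d+1}(n)$ has at least two odd-labeled vertices, since then every ridge contained in it retains at least one odd-labeled vertex.

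To verify the reduced claim I apply Gale's evenness criterion (Theorem~\ref{thm_gale}). A facet of $C_{d+1}(n)$ decomposes into a disjoint union of maximal runs of consecutive integers, and each run avoiding both $1$ and $n$ has even size and therefore contributes equally many odd and even elements. Let $s_0$ and $s_{m+1}$ denote the (possibly zero) sizes of the edge runs containing $1$ and $n$, respectively. The edge run at $1$ contributes $\lceil s_0 / 2 \rceil$ odd elements, while the edge run at $n$ contributes $\lceil s_{m+1}/2 \rceil$ when $n$ is odd and $\lfloor s_{m+1}/2 \rfloor$ when $n$ is even. Combining these with the $(d + 1 - s_0 - s_{m+1})/2$ odd elements from the interior runs and carrying out a short case analysis on the parities of $n$, $s_0$, and $s_{m+1}$, the total number of odd elements in a facet is at least $\lfloor (d+1)/2 \rfloor$, which is $\geq 2$ for $d \geq 3$. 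The hardest case — and the main obstacle — is when $n$ is even and the edge run at $n$ has odd size: switching from $\lceil \cdot / 2 \rceil$ to $\lfloor \cdot / 2 \rfloor$ costs one odd element compared to the $n$-odd case, but the lower bound $\lfloor (d+1)/2 \rfloor$ still holds.
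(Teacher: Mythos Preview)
Your proof is correct and follows essentially the same approach as the paper: extend the alternating parity coloring of $C_{d+1}(n)$ to $w$ and use Gale's evenness criterion to rule out monochromatic new facets. The paper argues more qualitatively that a monochromatic $d$-face $f$ of a facet must satisfy $|f\setminus\{1,n\}|\leq 1$ (forcing $d=3$ and then choosing $c(w)\neq c(1)$), whereas your explicit count showing each facet has at least $\lfloor (d+1)/2\rfloor\geq 2$ odd-labeled vertices is a slightly more computational route to the same conclusion, with the minor bonus that it shows the fixed choice $c(w)=\text{even}$ works uniformly without a residual case.
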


\begin{proof}
The case when $d=2$ is easily obtained from Proposition \ref{prop_basic_3-polytope}. So we assume $d\geq 3$. We identify the vertices of a cyclic polytope $C_{d+1}(n)$ with $1, \dots, n$ in order. We show how we can extend the alternating coloring $c: [n] \to \{R, B\}$ of $C_{d+1}(n)$ to a proper 2-coloring of $\sK$.

In fact, we only need to consider a facet which is newly added in the single element extension. Such a facet has the form $f\cup \{w\}$ where $ \vert f \vert =d\geq 3$ and $f$ is a face of some facet in $C_{d+1}(n)$. Among such faces $f$, again, it is enough to only consider monochromatic ones in the coloring $c$. By Gale's evenness criterion \ref{thm_gale} and the choice of $c$, we should have  $\vert f\setminus \{1,n\}\vert\leq 1$ otherwise
there is no way to extend $f$ to a facet of $C_{d+1}(n)$. This only leaves the case when $d=3$,  $\{1, n\}\subseteq f$ and $c(1)=c(n)$. Now, we color $w$ with the color different from $c(1)$. 
\end{proof}

In \cite{novik_neighborly}, Novik and Zheng constructed many neighborly spheres and counted the number of them. With their result (see \cite[Theorem 1.2]{novik_neighborly} and the proof), we can conclude that there are at least $2^{\Omega(n^{\lfloor d/2\rfloor})}$ neighborly $d$-spheres which are 2-colorable when $d$ is odd and $d\geq 5$.
\begin{remark}
In \cite{miyata}, Miyata and Padrol enumerated all neighborly oriented matroids of rank $r$ on $n$ elements for some small values of $r$ and $n$. Using the result, they tested many properties of oriented matroids, including the dual surrounding property $S^*(k)$. Their results particularly imply that every neighborly oriented matroid $\MM$ constructed in \cite{miyata} satisfies $\tau(\FF(\MM))\leq n/2$ where $\FF(\MM)$ denotes the facet hypergraph of $\MM$, which records incidences between the facets and elements of $\MM$, and $n$ is the number of elements of $\MM$. 
\end{remark}

\section{Constructions of \texorpdfstring{$(d+1)$-polytopes with transversal ratio $\frac{2}{d+2}$}{}}\label{section_plane_tight}
In this section we prove Theorem \ref{thm_plane_tight}. We first begin by introducing a notion which will be useful in the constructions of Theorem \ref{thm_plane_tight} and \ref{thm_trans}.

\begin{Def}
Let $\sK_1$ and $\sK_2$ be PL $d$-spheres, and let $f_i$ be a facet of $\sK_i$ for each $i \in [2]$. Given a bijection $\psi: f_1 \to f_2$, we define the \textit{connected sum} $\sK_1 \#_\psi \sK_2$ of $\sK_1$ and  $\sK_2$ with respect to $\psi$ as 
$$\sK_1 \#_\psi \sK_2=(\sK_1 \setminus \{f_1\})\sqcup  (\sK_2 \setminus \{f_2\})/\sim_\psi,$$
where $\sqcup$ denotes disjoint union and every proper subface $g_1$ of $f_1$ is identified with $\psi(g_1)$. 
\end{Def}

\begin{lemma}\label{lemma_connected_sum_sphere}
The connected sum $\sK_1\#_{\psi}\sK_2$ is also a PL $d$-sphere.
\end{lemma}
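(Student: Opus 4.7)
The plan is to decompose the connected sum into two PL balls, recognize that the gluing along the shared boundary is a PL homeomorphism, and then invoke the standard fact from PL topology that two PL $d$-balls glued along a PL homeomorphism of their boundary $(d-1)$-spheres form a PL $d$-sphere.

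First, I would verify the following preliminary claim: if $\sK$ is a PL $d$-sphere and $f$ is a facet of $\sK$, then $\sK\setminus\{f\}$ is a PL $d$-ball whose boundary complex is precisely $\partial\Delta(f)$. This is a standard fact: the facet $f$, viewed inside $\sK$, has a star that is a PL $d$-ball (namely $\Delta(f)$ itself, since $f$ is a top-dimensional simplex), and removing the single open top face $f$ corresponds to removing the interior of a PL $d$-ball from a PL $d$-sphere, which leaves a PL $d$-ball. The complement of the interior of a closed $d$-ball inside a $d$-sphere is a $d$-ball, and this passes to the PL category; see \cite{piecewise_linear_hudson}. The boundary of $\sK\setminus\{f\}$ then consists exactly of the proper faces of $f$, i.e.\ $\partial\Delta(f)$.

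Applying this to each $\sK_i$, we obtain PL $d$-balls $\sB_i := \sK_i\setminus\{f_i\}$ with $\partial \sB_i = \partial\Delta(f_i)$. The bijection $\psi\colon f_1\to f_2$ extends canonically to a simplicial isomorphism between the abstract simplicial complexes $\partial\Delta(f_1)$ and $\partial\Delta(f_2)$ by the rule $g_1\mapsto \psi(g_1)$, and this simplicial isomorphism induces a PL homeomorphism between their realizations (both of which are PL $(d-1)$-spheres). Thus $\sK_1\#_{\psi}\sK_2$ as defined is precisely the quotient $\sB_1\sqcup \sB_2/\!\sim_\psi$, which is the PL gluing of two PL $d$-balls along a PL homeomorphism of their boundaries.

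The final step is to invoke the PL version of the standard topological theorem: the union of two PL $d$-balls glued along a PL homeomorphism of their boundary $(d-1)$-spheres is a PL $d$-sphere. Concretely, one checks that the result is PL homeomorphic to the boundary of an abstract $(d+1)$-simplex by exhibiting such a decomposition on the standard sphere (split $\partial \Delta(V)$ for $|V|=d+2$ along the deletion of two complementary facets, obtaining two PL $d$-balls glued along a common $(d-1)$-sphere) and appealing to the uniqueness up to PL homeomorphism of the way one can attach a PL $d$-ball to a PL $d$-ball along a PL homeomorphism of their boundaries; again, this is a classical result in \cite{piecewise_linear_hudson}. The main thing to be careful about is the distinction between the combinatorial/simplicial gluing we have defined and the topological gluing, but since $\psi$ is a simplicial isomorphism of the relevant boundary complexes the two coincide, so no additional subdivision is needed. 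Putting the three steps together yields that $\sK_1\#_{\psi}\sK_2$ is a PL $d$-sphere.
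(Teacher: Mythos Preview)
Your proposal is correct and follows essentially the same route as the paper: remove a facet from each PL $d$-sphere to obtain two PL $d$-balls (the paper cites Newman's theorem for this), observe that the gluing is along their full boundaries via a simplicial isomorphism, and then invoke the standard PL fact that two PL $d$-balls glued along a PL homeomorphism of their boundary spheres yield a PL $d$-sphere. The only difference is in the references cited and the level of detail.
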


\begin{proof}
By Newman's theorem \cite[Theorem 4.7.21 (iii)]{om_book}, $\sK_i\setminus \{f_i\}$ is a PL $d$-ball for every $i\in [2]$. Since those two PL $d$-balls are attached along their entire boundaries, the connected sum is a PL $d$-sphere by \cite[Theorem 4.7.21 (ii)]{om_book}.
\end{proof}

\medskip 

Let $d \geq 2$ be a fixed integer. Let us denote the boundary complex of the abstract $(d+1)$-simplex on a vertex set $\{v_1, v_2, \dots, v_{d+2}\}$ by $\cS^d$, that is, let $$\cS^d=\partial \Delta(\{v_1, v_2, \dots, v_{d+2}\}).$$ Let $\CC^d$ be 
the boundary complex of the $(d+1)$-dimensional cross polytope, that is, $\CC^d$ is the $(d+1)$-fold join
$$\CC^d=\cS^0* \cdots * \cS^0.$$
Note that $\CC^d$ has a natural antipodal action.

Now, let us choose arbitrary facets $f$ and $g$ from $\CC^d$ and $\cS^d$, and let $\psi_{\cS}:f\to g$ be an arbitrary identification map. We denote $\CC^d \#_{\psi_{\cS}} \cS^d$ by $\CC^d_+$. Note that by symmetry, $\CC^d_+$ is independent from the choice of the facets $f$ and $g$, and the identification map $\psi_\cS$. The \textit{apex vertex} of $\CC^d_+$ is the vertex $w$ that was opposite to $g$ in $\cS^d$, and the \textit{base facet} of $\CC^d_+$ is the facet that was antipodal to $f$ in $\CC^d$. Since $\cS^d$ and $\CC^d$ have $d+2$ and $2(d+1)$ vertices respectively, and they share $d+1$ vertices in common at $\CC^d_+$, $\CC^d_+$ has
$$(d+2)+2(d+1)-(d+1)=2d+3$$
vertices.

\medskip 

Using the simplicial sphere $\CC^d_+$, we inductively construct simplicial $d$-spheres $\XX^d_k$ as follows. 
\begin{itemize}
\item Let $\XX^d_1=\cS^d$. We choose an arbitrary vertex of $\XX^d_1$ as the apex vertex of $\XX^d_1$. Obviously $\XX^d_1$ has $d+2$ vertices. 

\item When $k\geq 2$, let $\XX^d_k=\XX^d_{k-1}\#_{\psi^d_k} \CC^d_+$, where $\psi^d_k$ is an arbitrary bijection from a face $f_{k-1}$ which contains the apex vertex of $\XX^d_{k-1}$ to the base facet of $\CC^d_+$. We choose the apex vertex of $\CC^d_+$ as the apex vertex of $\XX^d_k$. This connected sum operation adds
$$(2d+3)-(d+1)=d+2$$
vertices to $\XX^d_{k-1}$ since $\CC^d_+$ has $2d+3$ vertices and shares $d+1$ vertices with $\XX^d_{k-1}$ in $\XX^d_k$. Using induction, we can conclude that $\XX^d_k$ has $k(d+2)$ vertices.
\end{itemize}
For our purpose, it does not matter which face $f_{k-1}$ and identification map $\psi^d_k$ we choose.

\begin{figure}[ht]
\centering
\includegraphics[width=10cm]{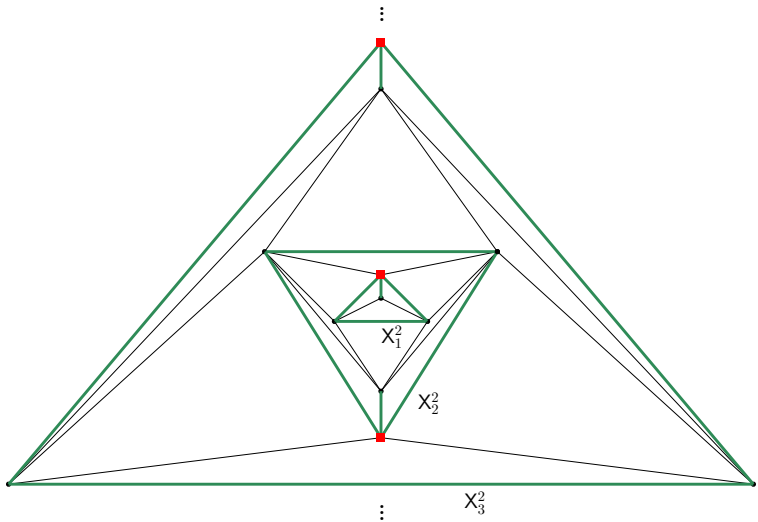}
\caption{A Schlegel diagram of $\XX^2_3$.
The three triangles with thick green edges surround Schlegel diagrams of the complexes $\XX^2_k$ for $k\in [3]$. Here, red squares describe the apex vertices of $\XX^2_k$.
}
\end{figure}	

\medskip 

Note that each of $\cS^d$ and $\CC^d_+$ can be realizable as the boundary of a $(d+1)$-dimensional polytope. 
Using these observations, the following lemma gives a stronger conclusion than Lemma \ref{lemma_connected_sum_sphere} that  $\XX^d_k$ can be realized as the boundary of a $(d+1)$-polytope.
\begin{lemma}\label{lemma_connected_sum_polytope}
Suppose that each of simplicial $d$-spheres $\sK_1$ and $\sK_2$ is realizable as the boundary of a $(d+1)$-polytope. Then the connected sum 
$\sK_1 \#_\psi \sK_2$ is realizable as the boundary of a $(d+1)$-polytope for any map $\psi$ from a facet of $\sK_1$ to a facet of $\sK_2$.
\end{lemma}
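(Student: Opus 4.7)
The plan is to use the standard projective flattening trick for gluing two polytopes along a common facet. First, realize each $\sK_i$ as the boundary of a simplicial $(d+1)$-polytope $\mathbf{P}_i \subset \RR^{d+1}$, chosen so that $f_i$ is a facet of $\mathbf{P}_i$. By applying an affine transformation to $\mathbf{P}_2$, we may assume that the vertices of $f_1$ and $f_2$ coincide under $\psi$, that both realized facets lie in a common hyperplane $H$, and that $\mathbf{P}_1$ and $\mathbf{P}_2$ lie in the two opposite closed half-spaces of $H$. The set-theoretic union $\mathbf{P}_1 \cup \mathbf{P}_2$ already has the right combinatorial boundary, namely $\sK_1 \#_\psi \sK_2$, but a priori it need not be convex: the only possible failure of convexity occurs at the ridges contained in $f_1 = f_2$, where the combined dihedral angle from the two sides might exceed $\pi$.

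The second step removes this defect by a projective "flattening." Choose coordinates with $H = \{x_{d+1} = 0\}$ and $\mathbf{P}_1 \subseteq \{x_{d+1} \geq 0\}$, $\mathbf{P}_2 \subseteq \{x_{d+1} \leq 0\}$. For large $\alpha > 0$, the projective map
$$
\phi_\alpha(y, z) \;=\; \Bigl(\frac{y}{1+\alpha z},\; \frac{z}{1+\alpha z}\Bigr),\qquad (y, z)\in \RR^d\times\RR,
$$
is defined and bijective on the open half-space $\{z > -1/\alpha\}$, fixes $H$ pointwise, and sends $\{z \geq 0\}$ into the slab $\{0 \leq z < 1/\alpha\}$. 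Because the "plane at infinity" $\{z = -1/\alpha\}$ is disjoint from $\mathbf{P}_1$, the image $\phi_\alpha(\mathbf{P}_1)$ is a polytope combinatorially equivalent to $\mathbf{P}_1$. A direct computation shows that every interior dihedral angle of $\phi_\alpha(\mathbf{P}_1)$ at a ridge contained in $f_1$ is $O(1/\alpha)$, hence arbitrarily small for $\alpha$ large. Applying the analogous map (with $z$ replaced by $-z$) to $\mathbf{P}_2$, we can arrange that both polytopes have all dihedral angles at the shared ridges less than $\pi/2$.

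Replacing $\mathbf{P}_i$ by these flattened copies, the combined dihedral angle at every ridge of $f_1 = f_2$ is strictly less than $\pi$, so $\mathbf{P} := \mathbf{P}_1 \cup \mathbf{P}_2$ is locally convex at every boundary point, and thus globally convex. The face lattice of $\partial \mathbf{P}$ is what one expects: every facet of $\mathbf{P}_i$ other than $f_i$ remains a facet of $\mathbf{P}$ (its supporting hyperplane still supports $\mathbf{P}$, since the other polytope lies on the correct side by the small-angle condition), while $f_1 = f_2$ is no longer a facet of $\mathbf{P}$ because its supporting hyperplane $H$ has polytope material on both sides. Thus $\partial \mathbf{P} = \sK_1 \#_\psi \sK_2$ as simplicial complexes, proving the lemma.

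The main obstacle is verifying that the projective flattening simultaneously preserves the combinatorial type of each $\mathbf{P}_i$ and drives all relevant dihedral angles uniformly to zero; both are immediate from the explicit form of $\phi_\alpha$ above, since it avoids $\mathbf{P}_i$ in its domain of definition and acts on the coordinate transverse to $H$ by a strict contraction whose rate is controlled by $\alpha$.
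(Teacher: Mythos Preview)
The paper does not actually prove this lemma; it simply cites it as a special case of \cite[Lemma 3.2.4]{realization_space_polytope} (Richter--Gebert). Your approach---realizing the two polytopes on opposite sides of a common facet hyperplane and then applying a projective ``flattening'' to make the union convex---is the standard technique and is essentially what the cited reference does.

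There is, however, a small but genuine gap. Your claim that the dihedral angles of $\phi_\alpha(\mathbf{P}_1)$ at the ridges of $f_1$ are $O(1/\alpha)$ is \emph{not} true for an arbitrary placement of the origin in $H$. The map $\phi_\alpha$ does not merely contract the transverse coordinate: it also rescales the in-plane coordinates by the factor $1/(1+\alpha z)$, so vertices with $z>0$ are pulled toward the origin. If the origin lies outside $f_1$, this can drag the upper vertices of $\mathbf{P}_1$ past the footprint of $f_1$ and send the relevant dihedral angles to $\pi$ rather than to $0$. Concretely, for $d=1$ take $\mathbf{P}_1$ to be the triangle with vertices $(2,0)$, $(4,0)$, $(3,1)$ and base $f_1=[(2,0),(4,0)]$; then $\phi_\alpha(3,1)=(3/(1+\alpha),\,1/(1+\alpha))\to(0,0)$, and the interior angles of $\phi_\alpha(\mathbf{P}_1)$ at the two base vertices tend to $\pi$, not $0$.

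The fix is simply to translate so that the origin lies in the relative interior of the shared facet $f_1=f_2$. Then for every facet $F\neq f_1$ of $\mathbf{P}_1$, the defining inequality $a\cdot y+bz\le c$ has $c>0$ (since the origin is in $\mathbf{P}_1$ but not in $F$), and under $\phi_\alpha$ the supporting hyperplane of $F$ becomes $a\cdot y'+(b+c\alpha)z'=c$, whose outward normal tends to the $+z$ direction; hence the dihedral angle with $f_1$ does go to $0$. With this one additional sentence your argument is correct.
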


This is a special case of \cite[Lemma 3.2.4]{realization_space_polytope}. Since there are $k(d+2)$ vertices in $\XX_k^d$, it is enough to show the following claim to prove Theorem \ref{thm_plane_tight}.

\begin{claim}
We have $\tau(\XX^d_k)=2k$. Furthermore, there is no minimum transversal of $\XX^d_k$ containing a facet incident to the apex vertex of $\XX^d_k$.
\end{claim}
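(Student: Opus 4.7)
The plan is to prove both parts simultaneously by induction on $k$. For the base case $k=1$, $\XX^d_1=\cS^d$ has all facets of size $d+1\geq 3$ and any two vertices form a transversal (the complement has only $d$ vertices, too few to be a facet), so $\tau(\XX^d_1)=2$; moreover no 2-element transversal can contain a facet of size $\geq 3$. For the inductive step, label the $d+2$ vertices of $\CC^d_+$ that are new to $\XX^d_k$ by $w$ (the apex of $\CC^d_+$, inherited from the $\cS^d$-summand) and $v_1,\ldots,v_{d+1}$ (the vertices shared between the $\CC^d$- and $\cS^d$-summands of $\CC^d_+$), and write $\{a_1',\ldots,a_{d+1}'\}$ for the base facet of $\CC^d_+$, which $\psi^d_k$ identifies with $f_{k-1}$. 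The facets of $\XX^d_k$ then consist of the facets of $\XX^d_{k-1}$ except $f_{k-1}$, together with the $d+1$ \emph{$\cS^d$-type facets} $S_i:=\{w\}\cup\{v_j:j\neq i\}$ for $i\in[d+1]$, and the $2^{d+1}-2$ \emph{mixed $\CC^d$-type facets} $F_\epsilon:=\{v_j^{\epsilon_j}\}_{j\in[d+1]}$ with $v_j^+:=v_j$, $v_j^-:=a_j'$ and $\epsilon$ neither constant $+$ nor constant $-$.

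For the upper bound $\tau(\XX^d_k)\leq 2k$, let $T_{k-1}$ be a minimum transversal of $\XX^d_{k-1}$, pick any $a_{j_0}'\in T_{k-1}\cap f_{k-1}$ (non-empty since $T_{k-1}$ covers $f_{k-1}$), and set $T_k:=T_{k-1}\cup\{w,v_{j_0}\}$. Then $|T_k|=2k$, the vertex $w$ covers every $S_i$, and each $F_\epsilon$ is covered by $v_{j_0}$ when $\epsilon_{j_0}=+$ and by $a_{j_0}'$ when $\epsilon_{j_0}=-$.

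For the lower bound, let $T_k$ be any transversal of $\XX^d_k$ and decompose $T_k=T_{k-1}\sqcup N_k$ with $T_{k-1}:=T_k\cap V(\XX^d_{k-1})$ and $N_k:=T_k\cap\{v_1,\ldots,v_{d+1},w\}$. Set $A:=\{j:a_j'\in T_{k-1}\}$ and $B:=\{j:v_j\in N_k\}$. Since each $S_i$ uses only new vertices, its coverage forces $|N_k|\geq 2$ unless $N_k=\{w\}$; covering all $F_\epsilon$ reduces to the condition that at least one of $A\cap B\neq\emptyset$, $(A,B)=(\emptyset,[d+1])$, or $(A,B)=([d+1],\emptyset)$ holds. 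Split on whether $T_{k-1}$ covers $f_{k-1}$. If yes, $T_{k-1}$ is a transversal of $\XX^d_{k-1}$ and $|T_{k-1}|\geq 2(k-1)$; the only way $|N_k|$ could be $1$ is $N_k=\{w\}$ with $B=\emptyset$, which by the above condition forces $A=[d+1]$, i.e.\ $f_{k-1}\subseteq T_{k-1}$, but then by the inductive apex hypothesis $T_{k-1}$ is not minimum and $|T_{k-1}|\geq 2k-1$. In every sub-case $|T_k|\geq 2k$. If no, then $A=\emptyset$ forces $B=[d+1]$ and $|N_k|\geq d+1$, while $T_{k-1}\cup\{v^*\}$ for any $v^*\in f_{k-1}$ is a transversal of $\XX^d_{k-1}$, giving $|T_{k-1}|\geq 2k-3$ and hence $|T_k|\geq 2k+d-2\geq 2k$.

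Finally, for the apex property at level $k$, suppose a minimum $T_k$ contains some $S_i$. Then $N_k\supseteq\{w\}\cup\{v_j:j\neq i\}$, so $|N_k|\geq d+1$ and $|T_{k-1}|\leq 2k-d-1$. This contradicts $|T_{k-1}|\geq 2(k-1)$ in the first case above (for all $d\geq 2$) and $|T_{k-1}|\geq 2k-3$ in the second case for $d\geq 3$. In the residual case $d=2$ we must have $|T_{k-1}|=2k-3$, $A=\emptyset$, $N_k=\{w\}\cup\{v_j:j\neq i\}$ and $v_i\notin N_k$, so the mixed facet $\{v_i,a_j',a_l'\}$ with $\{j,l\}=[3]\setminus\{i\}$ is uncovered by $T_k$, a contradiction. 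Since the only facets of $\XX^d_k$ containing $w$ are the $S_i$'s, the apex property at level $k$ follows. The main obstacle is the careful bookkeeping for the mixed $\CC^d$-type facets; the strengthened inductive hypothesis on apex-incident facets is exactly what rules out the otherwise-problematic sub-case $N_k=\{w\}$ in the lower bound, which would spoil the bound by one.
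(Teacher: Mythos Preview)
Your proof is correct and follows essentially the same inductive strategy as the paper, including the strengthened apex hypothesis. Minor presentational differences aside (you build the upper-bound transversal explicitly rather than extracting it from a $(d+2)$-coloring of the graph, and you organize the lower-bound cases via the clean covering criterion for the mixed $\CC^d$-facets in terms of $A$ and $B$ instead of the paper's three-way split on $|T\cap f_{k-1}|$), the logical content is the same.
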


To prove the claim, we use induction on $k$. When $k=1$, by definition $\XX^d_1$ is the boundary of an abstract $(d+1)$-simplex. Hence, $\tau(\XX^d_1)=2$. Since every facet of $\XX^d_1$ has size $d+1 > 2$, the second claim also holds for this case.

\smallskip

Now let us assume that the claim holds for $\XX^d_{k-1}$ where $k\geq 2$. Let $n= \vert V(\XX^d_k) \vert $, that is, let $n=k(d+2)$. We denote by $\CC^d_k$ the complex $\CC^d_+$ which is glued at $\XX^d_{k-1}$ to form $\XX^d_k$. Note that there is a natural antipodal action on non-apex vertices of $\CC^d_k$.

We first show that $\tau(\XX^d_k)\leq \frac{2}{d+2}n =2k$. In fact, it is easy to show that the graph of $\XX^d_k$ is $(d+2)$-colorable by induction: Suppose a proper $(d+2)$-coloring was already given to the graph of $\XX^d_{k-1}$. When we glue $\CC^d_k$ at $\XX^d_{k-1}$, the colors of $d+1$ vertices of $\CC^d_k$ in the base facet, which are necessarily distinct, are already determined by the precoloring of $\XX^d_{k-1}$ via identification. We color each of the remaining non-apex vertices of $\CC^d_k$ with the same color of the antipodal vertex of it in the base facet. We complete the extension by assigning to the apex vertex of $\CC^d_k$ the color which was not used on the base facet of $\CC^d_k$. The union of any two among $d+2$ color classes of the coloring is a transversal of $\XX^d_k$, and choosing a minimum-sized union among them gives the bound.

\smallskip 

Next, we show that a transversal of $\XX^d_k$ requires at least $\frac{2}{d+2}n=2k$ vertices. Suppose that $T$ is a minimum transversal of $\XX^d_k$. Let 
$f_{k-1}^-$ be the set of non-apex vertices of $\CC^d_k$ which are not in the base facet (which is identified with $f_{k-1}$) of $\CC^d_k$. Let $\FF$ be the set of facets of $\XX^d_k$ which are contained in $f_{k-1}\cup f_{k-1}^-$.  Note that $\sK(\FF)$ is isomorphic to the boundary complex of a cross polytope with a pair of antipodal facets removed.  There are 3 cases to consider.

\begin{enumerate}
\item When $T$ does not contain any vertex of $f_{k-1}$: In this case, we need at least $2(k-1)-1=2k-3$ vertices to traverse all facets of $\XX^d_{k-1}\setminus \{f_{k-1}\}$ by induction. To traverse all facets in $\FF$, we need all vertices of $f_{k-1}^-$, which has size $d+1$. Hence,

$$\vert T \vert \geq (2k-3)+(d+1) \geq (2k-3)+3=2k.$$
Note that this case never happens when $d\geq 3$, because we already have $\tau(\XX^d_k)\leq 2k$, but for $d\geq 3$, the above bound becomes $ \vert T \vert \geq (2k-3)+(d+1) \geq 2k+1$, contradicting $T$ is a minimum transversal.

\item When $T$ contains a vertex of $f_{k-1}$ but not all vertices: Note that in this case, $T\cap V(\XX^d_{k-1})$ is also a transversal of $\XX^d_{k-1}$. This requires at least $2(k-1)=2k-2$ vertices for $T\cap V(\XX^d_{k-1})$. Since $T$ does not contain $f_{k-1}$, there is at least one facet in $\FF$ not covered by $T \cap V(\XX_{k-1}^d)$, and $T$ must contain some vertex $w \in f_{k-1}^-$ in order to cover that facet. Additionally, $T$ must have at least one further vertex to cover the facet $\{v\} \cup f_{k-1}^- \setminus \{w\}$, where $v$ is the apex vertex of $\XX_k^d$. Hence,
$$ \vert T \vert \geq (2k-2)+2=2k.$$

\item When $T$ uses all vertices of $f_{k-1}$:  Again, $T\cap V(\XX^d_{k-1})$ is a transversal of $\XX^d_{k-1}$. By the second claim for $\XX^d_{k-1}$, $T\cap V(\XX^d_{k-1})$ is not a minimum transversal of $\XX^d_{k-1}$. This gives $\vert T\cap V(\XX^d_{k-1})\vert \geq 2(k-1)+1=2k-1$. Note that $T$ should use the apex vertex of $\XX^d_k$ to satisfy the minimality. Therefore, we have
$$\vert T\vert \geq (2k-1)+1=2k.$$
\end{enumerate}

The three scenarios described above show that if $T$ is a transversal of $\XX_k^d$ of size $2k$ then $T$ cannot contain any facet which includes the apex vertex $v$, since in the first case $T$ does not contain $v$ and in the others $T$ contains at most two vertices from $f_{k-1}^- \cup \{v\}$. This completes the proof of the claim. \qed 

\medskip

Particularly for the 2-dimensional case, 
the construction of Theorem \ref{thm_plane_tight} is not the only one  with transversal ratio exactly $1/2$. 
The boundary complex of the regular icosahedron is another example. The reader may verify this.  

Note that a simplicial 2-sphere $\sK$ with transversal ratio exactly $1/2$ has a strong symmetry, for example, every proper 4-coloring of the graph of $\sK$ has color classes of equal size. It might be interesting to investigate such simplicial 2-spheres further.

\begin{Q}\label{question_flip}
Can we characterize all simplicial 2-spheres with transversal ratio exactly $\frac{1}{2}$? What properties do they have?
\end{Q}

\section{Constructions of 3-spheres with transversal ratios larger than \texorpdfstring{$\frac{1}{2}$}{}}\label{section_non-2-colorable}
In this section, we prove Theorem \ref{thm_trans}. The proof relies on the following computational result.

\begin{thm}\label{thm_construction}
There is a 4-uniform hypergraph $\NN_{21}$ on 21 vertices such that $\sK(\NN_{21})$ is obtained by removing a single facet $\hat g$ from a neighborly PL 3-sphere $\sK_{21}$ and $\tau(\NN_{21})=11$.
\end{thm}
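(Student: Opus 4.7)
The statement is an existence claim whose witness is a specific 21-vertex 4-uniform hypergraph, so the natural proof is computational: exhibit $\NN_{21}$ (and the missing facet $\hat g$) in an appendix, and verify the three required properties, namely (a) that $\sK_{21}:=\sK(\NN_{21})\cup\{\hat g\}$ is a neighborly PL 3-sphere on 21 vertices, (b) that $\tau(\NN_{21})\le 11$, and (c) that $\tau(\NN_{21})\ge 11$.

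The plan is to generate candidate neighborly PL 3-spheres on 21 vertices by starting from a small cyclic polytope $C_4(n_0)$ and iteratively applying single element extensions (Proposition \ref{prop_single_element_ext-gen}), recording the chosen 1-neighborly, 1-stacked subball at each step. This guarantees by induction that every candidate is a neighborly PL 3-sphere with the correct vertex count. For each candidate and each facet $\hat g$, compute $\tau(\FF(\sK_{21})\setminus\{\hat g\})$. The upper bound $\tau\le 11$ is certified by exhibiting an explicit 11-element transversal and directly checking that it meets every hyperedge. The matching lower bound $\tau\ge 11$ is the crux: formulate the hitting-set problem as an integer linear program with 21 binary variables and one covering constraint per hyperedge of $\NN_{21}$, and use an exact solver to produce a certificate of infeasibility for $\tau\le 10$. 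For only 21 binary variables the search tree is small enough that the certificate can be presented as a branch-and-bound trace (or a fractional LP lower bound together with a small set of integrality cuts) and independently verified by a human.

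The main obstacle is controlling the combinatorial explosion of the search: there are astronomically many neighborly 3-spheres on 21 vertices, and determining $\tau$ is NP-hard in general, so blind enumeration is hopeless. A practical strategy is to prune candidate spheres early using LP-relaxation upper bounds on the independence number $\alpha$ (equivalently lower bounds on $\tau=n-\alpha$) computed on the partial sphere, to exploit symmetries of the seed cyclic polytope so as to enumerate only non-isomorphic extensions, and to bias the choice of subball toward configurations whose boundary forces many ``hard-to-hit'' local facets (e.g.\ copies of the cross-polytope boundary minus a facet, which already have large transversal ratio by the analysis in Section \ref{section_plane_tight}). A secondary difficulty is certificate presentation: once $\NN_{21}$ is found, one must still supply the sphere in a format (an explicit facet list together with the extension history from $C_4(n_0)$) that lets a referee verify the PL property by replaying Proposition \ref{prop_single_element_ext-gen}, confirm the 11-transversal by direct substitution, and replay the ILP infeasibility certificate for $\tau\ge 11$; Appendix \ref{section_appendix} is precisely where these three certificates will be tabulated.
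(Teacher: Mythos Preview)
Your overall framework matches the paper's: construct $\sK_{21}$ by iterated single element extensions starting from a small cyclic polytope, record the stacked ball at each step so that neighborliness and the PL property follow by induction from Proposition~\ref{prop_single_element_ext-gen}, and then certify $\tau(\NN_{21})=11$ computationally by exhibiting an explicit 11-transversal and exhaustively ruling out size-10 transversals. The paper does exactly this, starting from $C_4(7)$ and listing the fourteen stacked balls in Appendix~\ref{section_appendix}.

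Where you diverge is in the search heuristic. You propose guiding the search by LP-relaxation lower bounds on $\tau$ and by biasing toward subballs whose boundaries mimic cross-polytope-minus-a-facet configurations. The paper instead introduces an edge-degree parameter $\varepsilon(\sK)$ (the sum of the $n$ largest edge degrees) and at each extension step greedily minimises $\varepsilon$ among randomly sampled candidates. Their rationale, made explicit via Proposition~\ref{prop_edge_cover}, is that 2-colorability is equivalent to the existence of a bipartite edge-cover of the facets, and such covers are easier to find when many edges have high degree; keeping $\varepsilon$ low thus pushes the search away from cyclic-polytope-like (and hence 2-colorable) spheres. Your LP-based pruning is more directly tied to the target quantity $\tau$, but the paper's $\varepsilon$-heuristic is cheaper to evaluate and comes with a concrete structural justification; it also turned out to be discriminating enough to detect non-realizability of $\sK_{21}$ a posteriori (Subsection~5.3). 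Your cross-polytope biasing idea is less clearly applicable here, since the subballs in the 3-dimensional case are stacked 3-balls whose boundaries are stacked 2-spheres, not cross-polytope boundaries.
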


\subsection{An infinite construction based on Theorem \ref{thm_construction}}

Assuming Theorem \ref{thm_construction}, we first show how we can obtain an infinite construction which satisfies the conclusion of Theorem \ref{thm_trans}. Similarly with Section \ref{section_plane_tight}, we inductively construct simplicial 3-spheres using connected sums. Let $f$ be an arbitrary facet of $\CC^3$, the boundary complex of the 4-dimensional cross polytope which has 8 vertices, and $\psi_\sK :f\to \hat g$ be an arbitrary identification map for the facet $\hat g$ from Theorem \ref{thm_construction}.

We denote $\CC^3 \#_{\psi_\sK} \sK_{21}$ by $\CC_{\sK}$. Note that $\CC_{\sK}$ has $8+21-4=25$ vertices. Also note that by symmetry the isomorphism type of $\CC_\sK$ is independent from the choice of $f$ and $\psi_\sK$. We call the facet of $\CC_\sK$ which was the antipodal facet to $f$ in $\CC^3$ the \textit{base facet} of $\CC_\sK$.

\medskip 

We inductively construct 3-dimensional simplicial complexes $\YY_k$ as follows. 
\begin{itemize}
\item Let $\YY_1=\sK_{21}$. So $\YY_1$ has 21 vertices. We choose $\hat g$ as the base facet of $\YY_1$.

\item When $k\geq 2$, let $\YY_k=\YY_{k-1}\#_{\varphi_k} \CC_\sK$, where $\varphi_k$ is an arbitrary bijection from the base facet of $\YY_{k-1}$ to the base facet of $\CC_\sK$. We choose a facet from $\CC_\sK$ which was neither the base facet nor a facet in the copy of $\sK_{21} \setminus \{\hat{g}\}$ to be the base facet of $\YY_k$.
\end{itemize}
\begin{figure}[ht]
\centering
\includegraphics[width=9cm]{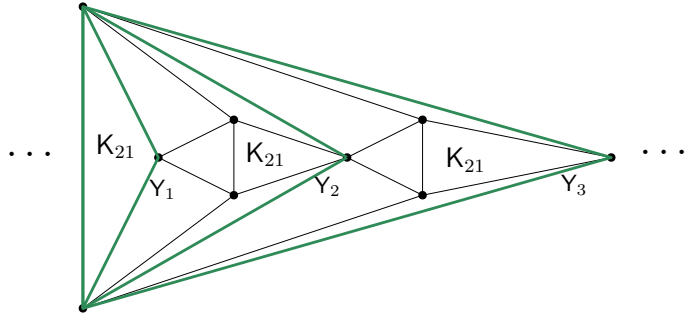}
\caption{A 2-dimensional illustration of the complexes $\YY_k$. 
The triangles with thick green edges surround Schlegel diagrams of the complexes $\YY_k$ for $k\in [3]$.} 
\end{figure}

Every time we conduct the connected sum operation with $\CC_\sK$ to $\YY_k$, we introduce $25-4=21$ additional vertices to $\YY_k$. So, $\YY_k$ has $21k$ vertices. This naturally partitions the vertex set $V(\YY_k)$ into $V_1, \dots, V_k$, where $V_i$ is the set of vertices introduced in the $i$th step. When $k\geq 2$, the induced subcomplex $\YY_k[V_i]$ has a facet hypergraph isomorphic to $\NN_{21}$. Hence, by Theorem \ref{thm_construction}, we need at least 11 vertices for each $V_i$ to traverse all facets of $\YY_k[V_i]$, which implies that the transversal ratio of $\YY_k$ is at least $11/21$. 

\smallskip

Since $\sK_{21}$ and $\CC^3$ are all PL spheres, each $\YY_k$ is a PL 3-sphere by Lemma \ref{lemma_connected_sum_sphere}.
This completes the proof of Theorem \ref{thm_trans}. \qed

\subsection{The experimental construction of \texorpdfstring{$\sK_{21}$}{}}

We now explain how we constructed the neighborly PL 3-sphere $\sK_{21}$ in Theorem \ref{thm_construction}.  The exact computational results are stated in Appendix \ref{section_appendix}.

Neighborly spheres have a large number of facets relative to the number of vertices, which might give a better chance to find a simplicial sphere of a larger transversal ratio. We simplify and restate Proposition \ref{prop_single_element_ext-gen} for our case, which describes an inductive method to construct neighborly spheres. Note that we do not particularly state the PL condition since every simplicial $d$-sphere is PL for $d\leq 3$ (see the paragraph after \cite[Definition 4.7.20]{om_book}).

\begin{prop}[Restatement of Proposition \ref{prop_single_element_ext-gen} when $d=3$] \label{prop_single_element_ext}
Let $\sK$ be a neighborly $3$-sphere on $V$. Let $\sB$ be a subcomplex of $\sK$ on $V$ which is a stacked $3$-ball. Then for a new vertex $w\notin V$, the new simplicial complex 
\[(\sK\setminus \sB)\cup (\partial \sB* \{\emptyset, \{w\}\})\] 
is a neighborly $3$-sphere.
\end{prop}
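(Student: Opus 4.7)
The plan is to treat this as a direct instance of Proposition \ref{prop_single_element_ext-gen} with $d=3$. Since $\lfloor (d+1)/2 \rfloor - 1 = 1$ in this case, ``$1$-neighborly with respect to $V$'' reduces to ``subcomplex on the vertex set $V$,'' ``$1$-stacked'' is the usual stackedness from Section \ref{subsection_basic_stacked}, and the PL hypothesis on the sphere is automatic since every simplicial $d$-sphere for $d\le 3$ is PL. The substantive content then splits into a topological step and a combinatorial step.

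Topologically, set $\sK':=(\sK\setminus \sB)\cup(\partial\sB*\{\emptyset,\{w\}\})$, interpreting $\sK\setminus\sB$ as the subcomplex of $\sK$ obtained by deleting the relative interior of $\sB$. By standard PL topology (for example, iterating Newman's theorem \cite[Theorem 4.7.21(iii)]{om_book} along the stacking sequence of $\sB$, one simplex at a time), $\sK\setminus\sB$ is a PL $3$-ball whose boundary equals $\partial\sB$. The cone $\partial\sB*\{\emptyset,\{w\}\}$ is manifestly a PL $3$-ball with the same boundary. Gluing two PL $3$-balls along their common boundary yields a PL $3$-sphere by \cite[Theorem 4.7.21(ii)]{om_book}, exactly as in the proof of Lemma \ref{lemma_connected_sum_sphere}.

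For neighborliness I must check that every pair $\{u,v\}\subseteq V\cup\{w\}$ is an edge of $\sK'$. If $u,v\in V$, then $\{u,v\}\in\sK$ by neighborliness of $\sK$; if $\{u,v\}\notin\sB$ it survives in $\sK\setminus\sB$, and if $\{u,v\}\in\sB$, the $1$-stackedness of $\sB$ (which places every face of dimension at most $d-1-1=1$ on $\partial\sB$) forces $\{u,v\}\in\partial\sB$, so it survives in the cone. If one vertex is $w$ and the other $u\in V$, then $u\in V(\sB)$ because $\sB$ is on $V$, and $1$-stackedness again puts $u\in V(\partial\sB)$, so $\{u,w\}$ is an edge of the cone. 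The only real care needed is fixing a consistent convention for the expression $\sK\setminus\sB$; once this is in place, the role of the $1$-stacked hypothesis---keeping every edge and every vertex of $\sB$ visible on $\partial\sB$---makes the neighborliness bookkeeping immediate, and no substantial obstacle remains.
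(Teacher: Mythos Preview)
Your first paragraph is exactly what the paper does: it treats the proposition purely as the $d=3$ instance of Proposition~\ref{prop_single_element_ext-gen} (which is cited from \cite{novik_neighborly} and not proved in the paper), noting that $\lfloor(d+1)/2\rfloor-1=1$ turns ``$1$-neighborly on $V$'' into ``on the vertex set $V$'', that $1$-stacked coincides with stacked, and that the PL hypothesis is automatic for $d\le 3$. The paper gives no further argument.

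Your second and third paragraphs go beyond the paper by sketching an actual proof of the special case. That sketch is correct, with one minor caveat: Newman's theorem as stated in \cite[Theorem~4.7.21(iii)]{om_book} removes a single facet from a PL \emph{sphere}, so after the first step you are working with a PL ball and need the companion fact that removing a facet which meets the boundary in a single codimension-$1$ face preserves the PL-ball property (an elementary collapse). This is standard, and your phrasing ``for example, iterating Newman's theorem'' signals you are aware the argument is routine rather than a literal repeated application. The neighborliness bookkeeping is clean and uses the $1$-stacked hypothesis in exactly the right way.
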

Recall that we call the process of Proposition \ref{prop_single_element_ext} a \textit{single element extension}.

\medskip 

Here is how we applied Proposition \ref{prop_single_element_ext}: Starting from the boundary complex of a cyclic polytope $\sK(C_4(7))$, we applied many single element extensions and obtained a neighborly sphere of size one larger than the previous one at a time. This resulting
neighborly sphere at each step was chosen to be optimal in a parameter $\varepsilon(\cdot)$ among all spheres of the same size randomly sampled in that step. We iterated this process until we got the complex $\sK_{21}$.

\medskip 

The purpose of the $\varepsilon$ parameter is to choose a neighborly sphere which is more likely to have a large transversal number. It is defined as follows.

\begin{Def}[The degree of an edge (of size 2) and $\varepsilon(\sK)$] \label{def_edge_degree}
For a simplicial sphere $\sK$ and an edge $e$ of $\sK$, $\deg_\sK(e)$ counts the number of facets of $\sK$ which contains $e$ as a subset. We define a parameter $\varepsilon(\sK)$ of a simplicial sphere $\sK$ on $n$ vertices as the sum of the $n$ largest values of the degrees of edges of $\sK$.
\end{Def}

\begin{eg}[The $\varepsilon$ value for cyclic 4-polytopes] \label{eg_cyclic_edge_degree} 
By Gale's evenness criterion (Theorem \ref{thm_gale}), one can see that a subset of the form $\{i, i+1\}$ modulo $n$ has degree $n-2$ in $C_4(n)$, which is the maximum possible value for the degree of an edge in a simplicial 3-sphere. Since there are $n$ such 2-subsets, we have $\varepsilon(\sK(C_4(n)))=n(n-2)$. \qed
\end{eg}

In the construction, we kept the $\varepsilon$ value \textit{as low as possible}. We briefly give a justification for this heuristic. The face numbers of a neighborly $d$-sphere are determined by its initial condition being $\lfloor (d+1)/2 \rfloor$-neighborly and the Dehn–Sommerville equations (\cite[Theorem 8.21]{lectures_on_polytopes_book}, check also the remark before Corollary 8.31 in \cite{lectures_on_polytopes_book}). So it is impossible to distinguish one neighborly 3-sphere from another by just comparing vertex degrees. Rather, the degree of an edge is more suitable to distinguish neighborly 3-spheres. 

Recall that our purpose is to obtain a simplicial sphere on $n$ vertices with the transversal number larger than $n/2$. A necessary condition of this is that the simplicial sphere is not 2-colorable. Here is a simple observation on 2-colorability of a simplicial sphere.
\begin{prop} \label{prop_edge_cover}
For a simplicial sphere $\sK$, $\FF(\sK)$ is 2-colorable if and only if there is a bipartite subgraph $G$ of the graph (or the 1-skeleton) of $\mathsf{K}$ such that for every facet $f$ of $\mathsf{K}$ there is an edge $e\in E(G)$ such that $e\subseteq f$.
\end{prop}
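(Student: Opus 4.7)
The plan is to prove both implications directly; the only fact about $\sK$ that is needed is the hereditary property of a simplicial complex, so the sphere hypothesis is not actually used.

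For the forward direction, I would begin with a proper 2-coloring $c: V(\sK) \to \{1,2\}$ of $\FF(\sK)$ and take $G$ to be the subgraph of the 1-skeleton consisting of all edges of $\sK$ whose two endpoints receive different colors under $c$. This $G$ is bipartite by construction, with bipartition $(c^{-1}(1), c^{-1}(2))$. Given any facet $f$ of $\sK$, properness of $c$ guarantees that $f$ meets both color classes, so there exist $u, v \in f$ with $c(u) \neq c(v)$. Since $\{u,v\} \subseteq f$ and $f \in \sK$, the hereditary property makes $\{u,v\}$ an edge of $\sK$, and therefore an edge of $G$ contained in $f$.

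For the converse, suppose such a bipartite $G$ is given, with bipartition $A \sqcup B$. I would define $c: V(\sK) \to \{1,2\}$ by placing $A$ in color class $1$ and putting every other vertex (including vertices outside $V(G)$) in color class $2$. For any facet $f$, by hypothesis there exists an edge $e = \{u,v\} \in E(G)$ with $e \subseteq f$, and one endpoint of $e$ lies in $A$ and the other in $B$, so $f$ receives both colors and is not monochromatic. Hence $c$ is a proper 2-coloring of $\FF(\sK)$.

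The argument amounts to little more than a dictionary between \emph{non-monochromatic facet} and \emph{facet containing a bichromatic edge}, so there is no genuine obstacle. The only subtlety worth flagging is in the forward direction, where one must remember that every pair of vertices inside a facet automatically spans an edge of the 1-skeleton; this is exactly the place where the simplicial-complex structure of $\sK$ enters the proof.
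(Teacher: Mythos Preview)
Your proof is correct. The paper does not actually give a proof of this proposition; it merely introduces it as ``a simple observation on 2-colorability of a simplicial sphere'' and leaves the verification to the reader. Your argument is exactly the natural dictionary translation the authors have in mind, and your remark that only the hereditary property of $\sK$ is used (so the sphere hypothesis is irrelevant) is accurate.
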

In other words, in order to have 2-colorability of $\sK$, $\sK$ needs to have a bipartite graph which covers all facets of $\sK$. The chance that there is such a covering bipartite graph might become higher when we can choose edges with large degrees.
One example is a cyclic polytope as described in Example \ref{eg_cyclic_edge_degree}. In fact, those edges in Example \ref{eg_cyclic_edge_degree}
cover all facets of $C_4(n)$ even without using the edge $\{1, n\}$, and in this sense 
cyclic polytopes are extremal.
As opposed to cyclic polytopes, we keep the $\varepsilon$ value low as possible, in order to find a neighborly sphere with different characteristics from cyclic polytopes, which is not 2-colorable, or even has the transversal number larger than $n/2$. 

\begin{remark}\label{remark_construction}
We also constructed a non-2-colorable neighborly 3-sphere on 13 vertices with a similar method. It should also be  noted that optimizing the $\varepsilon$ parameter does not look essential in finding a desirable construction, but it seems to make the process more efficient. 
\end{remark}

\subsection{Non-realizability of \texorpdfstring{$\sK_{21}$}{}} If $\sK_{21}$ can be realized as the boundary of a polytope, then, by using Lemma \ref{lemma_connected_sum_polytope} rather than Lemma \ref{lemma_connected_sum_sphere}, 
we can show that the constructions we obtained in this section can be all realizable by polytopes. Unfortunately, $\sK_{21}$ is not realizable, so it does not have direct implications to the $k$-surrounding property $S(k)$. This observation is based on computational constructions by Miyata and Padrol \cite{miyata} and the following proposition. The following proposition and its proof are essentially from \cite{nine_vertices}.

\begin{prop} \label{prop_realizable_neighborly}
Let $\sK_1$ and $\sK_2$ be two neighborly 3-spheres where $\sK_2$ is obtained from $\sK_1$ by a single element extension which adds a new vertex $w$. Suppose that $\sK_2$ is realizable as the boundary of a polytope. Then $\sK_1$ is also realizable by a polytope. 
\end{prop}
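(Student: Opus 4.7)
The plan is to realize $\sK_1$ as the boundary of the polytope $P' := \conv(V(P) \setminus \{w\})$, where $P$ is the given polytope realization of $\sK_2$ and $w$ is the vertex introduced by the single element extension. First, I would verify $V(P') = V(P) \setminus \{w\}$: every non-$w$ vertex of $P$ was in convex position in $V(P)$ and so remains in convex position in the smaller set. Second, every facet $F$ of $P$ not incident to $w$ persists as a facet of $P'$, since its supporting hyperplane $\aff(F)$ still supports the smaller vertex set $V(P')$; these facets account precisely for $\FF(\sK_1) \setminus \FF(\sB)$, the facets of $\sK_1$ unaffected by the single element extension.

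The main task is to show that each tetrahedron $T \in \FF(\sB)$ appears as a facet of $P'$ (and that these exhaust the new facets). I would argue this by induction on the number $m$ of tetrahedra in the stacked $3$-ball $\sB$. For the base case $m=1$, the ball $\sB = \{T\}$ is a single tetrahedron and the single element extension is the classical stacking of $w$ beyond $T$; here $\aff(T)$ separates $w$ from the remaining vertices (as forced by $\st(w) = \{w\} * \partial T$), and removing $w$ restores $T$ as a facet via the beneath-beyond construction. For the inductive step, write $\sB = \sB_{m-1} \cup \{\Delta_m\}$ with $\Delta_m$ attached to $\sB_{m-1}$ along a triangle $g$, let $v_m := \Delta_m \setminus g$ be its apex, and let $g_1, g_2, g_3$ be the other triangles of $\Delta_m$. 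Letting $\sK'$ denote the intermediate sphere obtained from $\sK_1$ by single element extension with $\sB_{m-1}$, a direct comparison shows that $\sK_2$ and $\sK'$ differ only by a single Pachner $(2,3)$-flip on the $4$-simplex $\{w\} \cup \Delta_m$: the two facets $\Delta_m$ and $\{w\} \cup g$ of $\sK'$ are swapped with the three facets $\{w\} \cup g_i$ of $\sK_2$, while all other facets coincide. Hence, I would realize $\sK'$ by moving $w$ a small amount across the hyperplane $\aff(\Delta_m)$, and then apply the inductive hypothesis to $\sK'$ (which corresponds to the smaller stacked ball $\sB_{m-1}$) to obtain a polytope realizing $\sK_1$.

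The hard part will be justifying that the Pachner flip in the inductive step is geometrically executable by a purely local perturbation of $w$. One must ensure that $w$ can cross $\aff(\Delta_m)$ without simultaneously crossing any other facet hyperplane of $P$, and that the resulting combinatorial type matches $\sK'$ rather than some other sphere. This requires a local analysis near the $4$-simplex $\{w\} \cup \Delta_m$: in $P$, the three facets $\{w\} \cup g_i$ meet along the edge $\{w, v_m\}$ and lie on the same side of $\aff(\Delta_m)$ as $w$; as $w$ crosses this hyperplane, the combinatorial structure forces these three facets to collapse and be replaced by $\Delta_m$ and $\{w\} \cup g$, precisely the combinatorial change from $\sK_2$ to $\sK'$.
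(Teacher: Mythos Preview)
Your approach diverges from the paper's at the ``main task,'' and the divergence hides a real circularity. The crux of your inductive step is the claim that moving $w$ across $\aff(\Delta_m)$ executes the desired Pachner flip: the three facets $\{w\}\cup g_i$ collapse and are replaced by $\Delta_m$ and $\{w\}\cup g_0$. But for this flip to occur at all, $\Delta_m$ must already be a facet of $P'$ --- the combinatorial type of $\conv(V\cup\{w\})$ is governed by which facets of $P'$ are visible from $w$, and crossing the affine span of four vertices that do \emph{not} span a facet of $P'$ need not produce the replacement you describe. Since ``$\Delta_m$ is a facet of $P'$'' is precisely one of the statements you are trying to establish, the local analysis you sketch is question-begging. (One can in fact prove $\Delta_m\in\FF(P')$ directly --- the edge $\{w,v_m\}$ has a triangular link in $\sK_2$, so in the vertex figure $P/v_m$ the vertex $w$ is simple, and removing it reveals $g_0$ as a facet --- but you would then still need to produce a position $w'$ whose visible cap is exactly $D\setminus\{\Delta_m\}$, and nothing in your outline guarantees that $w$ can reach $\aff(\Delta_m)$ without first crossing some other facet hyperplane of $P'$.)

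The paper sidesteps the geometric motion entirely. It observes, as you do, that every face of $\sK_2[V]$ persists in $\partial P'$, and then invokes a separate lemma (proved by Dehn--Sommerville face-counting for neighborly $3$-spheres together with Alexander duality) asserting that for \emph{any} simplicial $3$-sphere $\sL_1$ on $V$ containing $\sK_2[V]$, the complement $\sL_1\setminus\sK_2[V]$ is a stacked $3$-ball. Applied once to $\sL_1=\sK_1$ and once to $\sL_1=\partial P'$, this shows both caps are stacked $3$-balls with the common boundary $\partial\sB$; since a stacked $3$-ball is determined by its boundary, they coincide and $\partial P'=\sK_1$. Note that this argument genuinely uses neighborliness, whereas your proposed induction never invokes it --- another signal that your outline is missing a substantive ingredient.
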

As we mentioned before Proposition \ref{prop_single_element_ext}, every simplicial $d$-sphere is PL for $d\leq 3$. Hence, we do not particularly state the PL condition in Proposition \ref{prop_realizable_neighborly}.

We use the following lemma to prove Proposition \ref{prop_realizable_neighborly}.
\begin{lemma} \label{lemma_inv_single_element_extension} 
Let $\sL_2$ be a neighborly 3-sphere on a vertex set $V\cup \{w\}$. If there is another simplicial 3-sphere $\sL_1$ on $V$ which has all faces of $\sL_2[V]$, then the facets in $\sL_1\setminus \sL_2[V]$ form a stacked 3-ball.
\end{lemma}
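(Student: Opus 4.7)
The plan is to identify the stacked ball promised in the lemma with the closure of the topological complement of $\sL_2[V]$ inside $\sL_1$, and then to use the neighborliness of $\sL_2$ to force every low-dimensional face of that complement onto its boundary.

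For the first step I would pin down the geometry of $\sL_2[V]$. Every simplicial $3$-sphere is PL (cf.\ the remark preceding Proposition~\ref{prop_single_element_ext}), so $\sL_2$ is a PL $3$-sphere, and the closed star $\{w\}*\lk_{\sL_2}(w)$ is a PL $3$-ball with boundary $\lk_{\sL_2}(w)$. Newman's theorem, as already invoked in Lemma~\ref{lemma_connected_sum_sphere}, then implies that the closure of the complement of the open star, which is exactly the induced subcomplex $\sL_2[V]$, is again a PL $3$-ball with $\partial \sL_2[V]=\lk_{\sL_2}(w)$.

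Next let $\mathcal{A}$ be the subcomplex of $\sL_1$ generated by the facets in $\sL_1\setminus \sL_2[V]$. Since $\sL_1$ is a PL $3$-sphere and $\sL_2[V]$ is a PL $3$-ball subcomplex of $\sL_1$, a second application of Newman's theorem shows that $\mathcal{A}$ is a PL $3$-ball and that
\[
\mathcal{A}\cap \sL_2[V] \;=\; \partial \mathcal{A} \;=\; \partial \sL_2[V] \;=\; \lk_{\sL_2}(w).
\]
It therefore suffices to verify that $\mathcal{A}$ is $1$-stacked in the sense of Section~\ref{subsection_basic_single_element_ext}, i.e.\ that every vertex and every edge of $\mathcal{A}$ is already a face of $\partial \mathcal{A}$.

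Here is where the neighborliness of $\sL_2$ finally enters: because $\sL_2$ is $\lfloor 4/2\rfloor=2$-neighborly on $V\cup\{w\}$, the induced subcomplex $\sL_2[V]$ contains every vertex of $V$ and every pair $\{v_1,v_2\}\subseteq V$ as a face. Since $\mathcal{A}\subseteq \sL_1$ has vertex set contained in $V$, every vertex and every edge of $\mathcal{A}$ also lies in $\sL_2[V]$, and hence in $\mathcal{A}\cap \sL_2[V]=\partial \mathcal{A}$, which gives the desired stackedness. The main obstacle I anticipate is the PL-topology step producing $\mathcal{A}$: one has to check that the subcomplex generated by the facets of $\sL_1$ outside $\sL_2[V]$ really realizes the closure of the topological complement of $\sL_2[V]$ in $\sL_1$, so that Newman's theorem applies and gives a ball whose boundary matches $\partial \sL_2[V]$. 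Once that PL identification is in place, the $2$-neighborliness of $\sL_2$ finishes the argument essentially for free.
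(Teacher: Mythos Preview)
Your proof is correct and takes a genuinely different route from the paper's. Both arguments begin with the same observation from $2$-neighborliness, namely that $\sL_1\setminus \sL_2[V]$ contains no new vertices or edges, but they diverge from there. The paper proceeds enumeratively: it computes via the Dehn--Sommerville equations that $\mathcal{N}=\sL_1\setminus \sL_2[V]$ has exactly $n-3$ tetrahedra and $n-4$ triangles, builds the facet-ridge graph $G$ on those tetrahedra, and then invokes Alexander duality (comparing the complements of $|\sL_2[V]|$ inside $|\sL_1|$ and $|\sL_2|$) to show $G$ is connected, hence a tree. You instead apply Newman's theorem twice to conclude that the subcomplex $\mathcal{A}$ generated by the new facets is a PL $3$-ball with $\partial\mathcal{A}=\lk_{\sL_2}(w)$, and then read off $1$-stackedness from the fact that every edge and vertex of $\mathcal{A}$ already lies in $\sL_2[V]\cap\mathcal{A}=\partial\mathcal{A}$. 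Your argument is more conceptual, avoids both the face-number computation and Alexander duality, and makes the role of neighborliness transparent; it also points directly to the higher-dimensional analogue (for neighborly $(2k{-}1)$-spheres one would get a $(k{-}1)$-stacked ball). The paper's approach, by contrast, directly produces the tree structure of the facet-ridge graph---which is the paper's primary definition of ``stacked''---without relying on the equivalence ``$1$-stacked $\Leftrightarrow$ stacked'' asserted in Section~\ref{subsection_basic_single_element_ext}. The PL obstacle you flag is routine once $\mathcal{A}$ is known to be a PL ball: any face $\tau\in\mathcal{A}\cap\sL_2[V]$ lies in some facet of $\sL_2[V]$ by purity, so $\lk_{\mathcal{A}}(\tau)$ is a proper subcomplex of the sphere $\lk_{\sL_1}(\tau)$ and therefore must be a ball, giving $\tau\in\partial\mathcal{A}$.
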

For a set of simplices $\sK$, denote the number of $i$-dimensional faces of $\sK$ by $f_i(\sK)$. For a neighborly sphere $\sK$, recall that $f_i(\sK)$ is completely determined by $f_0(\sK)$ using the Dehn-Sommerville equations \cite[Theorem 8.21]{lectures_on_polytopes_book}. Especially when $\sK$ is a neighborly 3-sphere,
\begin{align}
f_2(\sK)=2\binom{f_0(\sK)-1}{2}-2, \textrm{ and } f_3(\sK)=\binom{f_0(\sK)-1}{2}-1.   \label{eqn_face_number_neighborly}
\end{align}

\begin{proof}[Proof of Lemma \ref{lemma_inv_single_element_extension}]

Let $n=\vert V \vert$. We first count how many new faces should be added to $\sL_2[V]$ in order to form $\sL_1$. Let $\mathcal{N}$ be the set of those new faces, that is, let $\mathcal{N}=\sL_1 \setminus \sL_2[V]$.
Note that $\sL_1$ is also neighborly since the 1-skeleton of $\sL_2[V]$ is complete and it is contained in $\sL_1$. In particular, $f_0(\mathcal{N})=f_1(\mathcal{N})=0$. So we only consider 2- and 3-faces.

Let $\mathcal{S}$ be the set of faces in $\sL_2$ which contains $w$. Note that each face of $\mathcal{S}$ corresponds to a face in the link
$$\lk_{\sL_2}(w):=\{g \in \sL_2: g\cup \{w\} \in \sL_2,\, w\notin g \}$$ which is 1 dimension lower. Since $\sL_2$ is a PL sphere, every link in $\sL_2$ is a simplicial sphere (see \cite[Theorem 4.7.21 (iv)]{om_book}). In particular,
$\lk_{\sL_2}(w)$ is a simplicial 2-sphere which contains all the other vertices of $\sL_2$ other than $w$, so 
\begin{align}
f_2(\mathcal{S})=3n-6, \textrm{ and } f_3(\mathcal{S})=2n-4. \label{eqn_link_face_number}
\end{align}
Hence, we have
\begin{align*}
& f_2(\mathcal{N})=f_2(\sL_1)-f_2(\sL_2[V])=f_2(\sL_1)-(f_2(\sL_2)-f_2(\mathcal{S}))\\
\stackrel{(\ref{eqn_face_number_neighborly}), (\ref{eqn_link_face_number})}{=}& \left(2\binom{n-1}{2}-2\right)-\left(2\binom{n}{2}-2\right)+(3n-6)=3n-6-2(n-1)=n-4,
\end{align*}
and similarly
\begin{align*}
& f_3(\mathcal{N})=f_3(\sL_1)-(f_3(\sL_2)-f_3(\mathcal{S}))\\
\stackrel{(\ref{eqn_face_number_neighborly}), (\ref{eqn_link_face_number})}{=}& \left(\binom{n-1}{2}-1\right)-\left(\binom{n}{2}-1\right)+(2n-4)=2n-4-(n-1)=n-3.
\end{align*}

Let $G$ be the graph which has the 3-faces of $\mathcal{N}=\sL_1 \setminus \sL_2[V]$ as vertices, and where a pair of 3-faces are adjacent when they share a common 2-face. Note that the conclusion is equivalent to $G$ being a tree.
Let $N$ (resp. $S$) be the complement of the geometric realization $\vert\vert\sL_2[V]\vert\vert$ of $\sL_2[V]$ as a subspace of a geometric realization of $\sL_1$ (resp. $\sL_2$). 
Let $\widetilde H_i(X)$ denote the $i$-th reduced homology group of a space $X$ and let $\widetilde H^i(X)$ denote the $i$-th reduced cohomology group of $X$.
Since $\sL_2[V]$ is a finite simplicial complex, $\vert\vert\sL_2[V]\vert\vert$ is compact and locally contractible \cite[Proposition A.4]{hatcher2002algebraic}, so by Alexander Duality \cite[Theorem 3.44]{hatcher2002algebraic}, $\widetilde H_0(N)$ is isomorphic to $\widetilde H^2(\vert\vert\sL_2[V]\vert\vert)$, which is isomorphic to 
$\widetilde H_0(S)$, which is the trivial group, since $S$ is the interior of a cone over a 2-sphere.  Hence, $N$ is path connected, and therefore we can always choose a path between two interior points from different facets of $N$. This corresponds to a path in $G$ since $\mathcal{N}$ only has 2- and 3-faces, and 2-faces of $\mathcal{N}$ correspond to edges in $G$. Thus, $G$ is a connected graph with $n-3$ vertices and $n-4$ edges, so $G$ is a tree, which means that the facets of $\mathcal{N}$ forms a stacked 3-ball.  

\end{proof}

\begin{proof}[Proof of Proposition \ref{prop_realizable_neighborly}]
Let $\mathbf{P}_2$ be a realization of $\sK_2$ as a polytope. For simplicity, we identify the vertex sets of $\sK_2$ and $\mathbf{P}_2$, and denote $V(\sK_2)\setminus \{w\}$ by $V$, that is, $V=V(\sK_1)$. Let $\mathbf{P}_1=\conv (V)$. We claim that $\sK(\mathbf{P}_1)=\sK_1$.

Note that $\sK_1 \cap \sK_2 = \sK_2[V]$: The inclusion $\subseteq$ is clear because every face of $\sK_1$ is contained in $V$. The other inclusion $\supseteq$ also holds because $\sK_2$ is obtained from $\sK_1$ by a single element extension. Also note that $\sK(\mathbf{P}_1) \cap \sK(\mathbf{P}_2) = \sK(\mathbf{P}_2)[V]$: $\subseteq$ holds by the same reason. $\supseteq$ also holds because every face of $\mathbf{P}_2$ whose vertices are in $V$ is also extreme in $\mathbf{P}_1$. As a conclusion, we get
\begin{align}
\sK_1 \cap \sK_2 = \sK_2[V]=\sK(\mathbf{P}_2)[V]= \sK(\mathbf{P}_1) \cap \sK(\mathbf{P}_2). \label{eqn_polytope_sphere_same}
\end{align}
It is enough to show that this identity extends to the rest of $\sK_1$ and $\sK(\mathbf{P}_1)$.

By Lemma \ref{lemma_inv_single_element_extension}, the facets of both $\sK_1 \setminus \sK_2$ and $\sK(\mathbf{P}_1) \setminus \sK(\mathbf{P}_2)$ are stacked 3-balls.
Every stacked 3-ball has a vertex whose link is a triangle, namely the last vertex added in the stacking, so by induction, a stacked 3-ball is completely determined by its boundary. 
The stacked 3-balls $\sK_1 \setminus \sK_2$ and $\sK(\mathbf{P}_1) \setminus \sK(\mathbf{P}_2)$ have the same boundary,
so $\sK_1 \setminus \sK_2 = \sK(\mathbf{P}_1) \setminus \sK(\mathbf{P}_2)$, 
and therefore $\sK_1 = \sK(\mathbf{P}_1)$.
\end{proof}

By Proposition \ref{prop_realizable_neighborly}, if $\sK_{21}$ were realizable, then $\sK_{11}$, the intermediate neighborly sphere with 11 vertices, should be also realizable by a neighborly 4-polytope. However, while we have $\varepsilon(\sK_{11})=74$, the minimum among $\varepsilon$ values of all neighborly oriented matroids of rank 5 with 11 elements, which in particular include all neighborly 4-polytopes, is $81$. This computation was done by a computer. We went through the supplementary data file ``\texttt{neighborly11\_5.zip}"\footnote{This file is available at \url{https://sites.google.com/site/hmiyata1984/neighborly_polytopes}.} of \cite{miyata} and calculated $\varepsilon$ values for each facet list. This shows that $\sK_{21}$ is not realizable. \qed

\begin{Q}\label{question_realizable}
For every integer $d\geq 3$, is there a $(d+1)$-polytope $\mathbf{P}$ such that $\tau(\mathbf{P})> \vert V(\mathbf{P}) \vert /2$?
\end{Q}

\section{An upper bound on the chromatic number of simplicial spheres} \label{section_upper_bound}
In this section, we prove Theorem \ref{thm_main_upper_bound}, that is, we prove that if $\mathbf{P}$ is a $(d+1)$-dimensional simplicial polytope on $n$ vertices, then $\chi(\mathbf{P}) \in O(n^{\frac{\lceil d/2\rceil-1}{d}})$. Later we generalize it for simplicial spheres (see Theorem \ref{thm_main_upper_bound_simplicial_sphere}) in Subsection \ref{subsection_upper_bound_embeddable}.

\medskip 

For Theorem \ref{thm_main_upper_bound}, we prove the following more general statement. A hypergraph $\HH$ is called \textit{linearly $d$-embeddable} if there is a linear embedding $\psi: \vert  \vert \sK(\HH) \vert  \vert \to \RR^d$.

\begin{thm}\label{thm_linearly_embed_upper_bound}
Let $\HH$ be a $(d+1)$-uniform hypergraph on $n$ vertices which is linearly $d$-embeddable. Then, we have
\[\chi(\HH) \in O(n^{\frac{\lceil d/2\rceil-1}{d}}).\]
\end{thm}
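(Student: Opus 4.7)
The plan is to apply the symmetric Lov\'asz Local Lemma (LLL) iteratively on low-degree subhypergraphs so as to benefit from the \emph{average} degree of $\HH$ rather than its maximum. The bound $O(n^{\lceil d/2\rceil/d})$ of Heise, Panagiotou, Pikhurko, and Taraz comes from a first moment argument: a uniform random $c$-coloring yields an expected $|\HH|/c^d$ monochromatic edges, and the Dey--Pach upper bound $|\HH|\leq C_d n^{\lceil d/2\rceil}$ then forces $c=O(n^{\lceil d/2\rceil/d})$. In contrast, LLL applied to a $(d{+}1)$-uniform hypergraph with maximum vertex degree $\Delta$ produces a proper coloring with $O(\Delta^{1/d})$ colors; since the \emph{average} degree is only $\bar\Delta=(d{+}1)|\HH|/n=O(n^{\lceil d/2\rceil-1})$, the desired saving of $n^{1/d}$ would appear if we could enforce $\Delta=O(\bar\Delta)$ locally.

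First I would invoke Dey--Pach on $\HH$ to obtain $\bar\Delta\leq C'_d\,n^{\lceil d/2\rceil-1}$, and then Markov's inequality yields a set $W\subseteq V(\HH)$ of vertices of $\HH$-degree at most $2\bar\Delta$ with $|W|\geq n/2$; every vertex of $\HH[W]$ automatically has $\HH[W]$-degree at most $2\bar\Delta$. Next I would color $\HH[W]$ by assigning each vertex an independent uniform color from a palette of $c:=\lceil(2e(d{+}1)\cdot2\bar\Delta)^{1/d}\rceil=O(n^{(\lceil d/2\rceil-1)/d})$ colors. Each bad event (a hyperedge of $\HH[W]$ is monochromatic) has probability $c^{-d}$ and depends on at most $(d{+}1)\cdot 2\bar\Delta$ others, so the symmetric LLL condition $e\,c^{-d}(d{+}1)\cdot 2\bar\Delta\leq 1$ is satisfied by construction and $\HH[W]$ is properly $c$-colored.

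Then I would iterate on $\HH[V(\HH)\setminus W]$, which remains $(d{+}1)$-uniform and linearly $d$-embeddable (by restricting the ambient embedding) and so satisfies its own Dey--Pach bound. Color it by the same procedure using a fresh palette of $O((n/2)^{(\lceil d/2\rceil-1)/d})$ colors disjoint from the first, and continue. Any hyperedge of $\HH$ either lies inside some round's $W_i$ (handled by that round's LLL) or meets two rounds, in which case the disjoint palettes make it non-monochromatic automatically. After $O(\log n)$ rounds every vertex is colored, and summing palette sizes gives the convergent geometric series
\[
\sum_{i\geq 0}O\!\bigl((n/2^i)^{(\lceil d/2\rceil-1)/d}\bigr)\;=\;O\!\bigl(n^{(\lceil d/2\rceil-1)/d}\bigr),
\]
valid precisely when $\lceil d/2\rceil\geq 2$, i.e.\ $d\geq 3$; the boundary case $d=2$ asserts only $\chi(\HH)=O(1)$, handled by the four color theorem as in the proof of Proposition~\ref{prop_basic_3-polytope}.

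The main obstacle is conceptual. A single global LLL using the maximum degree of $\HH$ recovers only the Heise--Panagiotou--Pikhurko--Taraz bound, because $\Delta$ may be as large as $O(n^{\lceil(d-1)/2\rceil})$ (the upper bound theorem applied to a vertex link), which is a factor of $n$ larger than $\bar\Delta$ when $d$ is even. The ``low-degree half'' restriction together with the iteration is precisely what forces each round into the regime $\Delta=O(\bar\Delta)$, where the $n^{1/d}$ improvement materialises. The only technical nicety to verify is that linear $d$-embeddability passes to induced subhypergraphs---immediate by restricting the embedding---so that the Dey--Pach bound may be re-invoked in every round.
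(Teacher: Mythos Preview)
Your argument is correct and gives the same bound, but the route is genuinely different from the paper's. The paper does not use the Local Lemma at all: instead it shows via the alteration method (Lemma~\ref{lemma_alteration}) that any $(d{+}1)$-uniform $\HH'$ with $|\HH'|\le c\,m^{\lceil d/2\rceil}$ contains an independent set of size $\Omega(m^{((d+1)-\lceil d/2\rceil)/d})$, then greedily strips off a maximum independent set and repeats on what remains; a recurrence lemma (Lemma~\ref{lemma_iteration}) bounds the number of rounds, and hence the number of colors, by $O(n^{(\lceil d/2\rceil-1)/d})$. In other words, the paper removes \emph{one color class} per round and needs polynomially many rounds, while you remove \emph{half the vertices} per round and need only $O(\log n)$ rounds, paying $O((n/2^i)^{(\lceil d/2\rceil-1)/d})$ fresh colors in round $i$ and summing the geometric series.

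What each approach buys: the paper's argument is more elementary (only deletion/alteration, no LLL) and uniform in $d$; your Markov-then-LLL step is the more ``algorithmic'' one and, combined with Moser--Tardos, yields an efficient randomized coloring procedure with few rounds. Both arguments rely on re-invoking Dey--Pach on induced subhypergraphs, and both degenerate at $d=2$ for the same reason (the exponent becomes $0$): your geometric series no longer converges, and in the paper's Lemma~\ref{lemma_iteration} the hypothesis $t<1$ fails. Your appeal to the four color theorem to cover $d=2$ is fine; the paper's main interest is $d\ge 3$ in any case. One small inaccuracy in your preamble: Heise--Panagiotou--Pikhurko--Taraz already use the LLL (globally, with the maximum degree), not merely a first-moment count; your improvement over their bound comes precisely from iterating on the low-degree half so that the LLL sees only the average degree, as you correctly explain.
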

We first show that Theorem \ref{thm_linearly_embed_upper_bound} implies Theorem \ref{thm_main_upper_bound}. 

\begin{proof}[Proof of Theorem \ref{thm_main_upper_bound}]
Let $\mathbf{P}$ be a polytope, and $\HH$ be a $(d+1)$-uniform hypergraph obtained by removing one hyperedge of $\FF(\mathbf{P})$. By using an appropriate Schlegel diagram of $\mathbf{P}$, we can easily see that $\HH$ is linearly $d$-embeddable. By Theorem \ref{thm_linearly_embed_upper_bound}, $\chi(\HH)\in O(n^{\frac{\lceil d/2\rceil-1}{d}})$, which implies $\chi(\mathbf{P})\in O(n^{\frac{\lceil d/2\rceil-1}{d}})$.
\end{proof}

Now we begin to prove Theorem \ref{thm_linearly_embed_upper_bound}. The only implication from linear embeddability we use is the following upper bound theorem by Dey and Pach \cite[Theorem 2.1]{upper_bound_pach}.

\begin{thm} \label{thm_upper_bound_pach}
Let $\HH$ be a $(d+1)$-uniform hypergraph on $n$ vertices which is linearly embeddable into $\RR^d$. Then we have $ \vert \HH \vert <(d+1)n^{\lceil d/2 \rceil}$.
\end{thm}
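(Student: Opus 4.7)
The plan is to prove the bound by induction on the dimension $d$, using a vertex-deletion argument at the convex hull for the inductive step. For the base case $d=1$, a $2$-uniform hypergraph linearly embeddable in $\RR$ consists of pairwise interior-disjoint intervals, so $|\HH| \leq n-1 < 2n$. For $d=2$, the realization of $\sK(\HH)$ is a straight-line planar $2$-complex whose triangles have pairwise disjoint interiors, so Euler's formula gives $|\HH|\leq 2n-4 < 3n$.

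For the inductive step, first perturb $\psi$ slightly so that $\psi(V(\HH))$ is in general position in $\RR^d$ (without destroying the embedding property). Pick $v \in V(\HH)$ whose image $\psi(v)$ is a vertex of the convex hull of $\psi(V(\HH))$, and choose an affine hyperplane $H$ strictly separating $\psi(v)$ from the remaining images. Radial projection from $\psi(v)$ onto $H$ sends each $d$-simplex $\psi(e)$ with $v\in e$ bijectively onto a $(d-1)$-simplex in $H\cong \RR^{d-1}$, and because $\psi(v)$ is extreme this projection is injective on the entire star of $v$ in the realization. Hence the link $\lk_\HH(v)$---a $d$-uniform hypergraph on at most $n-1$ vertices---inherits a linear embedding into $\RR^{d-1}$. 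The inductive hypothesis in dimension $d-1$ gives $\deg_\HH(v) = |\lk_\HH(v)| < d(n-1)^{\lceil (d-1)/2\rceil}$. Deleting $v$ from $\HH$ leaves another linearly $d$-embeddable $(d+1)$-uniform hypergraph on $n-1$ vertices, so writing $h(d,n)$ for the maximum of $|\HH|$ in the class under consideration, we obtain the recurrence
\[
h(d,n) \;\leq\; h(d,n-1) \,+\, d\,(n-1)^{\lceil (d-1)/2\rceil}.
\]

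For odd $d$ this closes cleanly: since $\lceil(d-1)/2\rceil = (d-1)/2 = \lceil d/2\rceil - 1$, summing telescopes to $h(d,n) = O(n^{\lceil d/2\rceil})$, and tracking the constant against the induction hypothesis verifies the explicit bound $(d+1)\,n^{\lceil d/2\rceil}$. The main obstacle is the case of \emph{even} $d$: here $\lceil(d-1)/2\rceil = d/2 = \lceil d/2\rceil$, so the naive recurrence overshoots the target by a factor of $n$. To bridge this gap, the plan is to extend $\sK(\HH)$ to a simplicial $d$-sphere by capping off its realization inside the one-point compactification $\RR^d\cup\{\infty\} \cong S^d$: the outer boundary of the image is a $(d-1)$-subcomplex whose facet count is controlled by the odd-dimensional case already handled, and the exterior region can be triangulated using only $O(1)$ extra vertices, adding at most $O(n^{\lceil (d-1)/2\rceil})$ further $d$-faces. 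Stanley's Upper Bound Theorem applied to the resulting simplicial $d$-sphere on $n+O(1)$ vertices then yields $|\HH| \leq f_d(\partial C_{d+1}(n+O(1))) = \Theta(n^{\lceil d/2\rceil})$, matching the desired bound. The delicate step, and where I expect the real work to lie, is verifying that this capping-off is actually possible---that is, that the realization of $\sK(\HH)$ globally extends to a manifold triangulation of $S^d$---and controlling the number of auxiliary facets introduced in the process.
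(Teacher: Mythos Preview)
The paper does not prove this statement; it is quoted as an external result of Dey and Pach and used as a black box in Section~\ref{section_upper_bound}. So there is no in-paper proof to compare your attempt against.

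Evaluating your attempt on its own: the vertex-figure projection of the link of an extreme vertex into a separating hyperplane is correct and does give the recurrence
\[
h(d,n)\ \le\ h(d,n-1)\ +\ d\,(n-1)^{\lceil(d-1)/2\rceil},
\]
and together with your base cases this settles $d\le 3$. The genuine gap is exactly where you flag it, the even case $d\ge 4$. Your plan there---extend the realization of $\sK(\HH)$ to a simplicial $d$-sphere on $n+O(1)$ vertices and invoke Stanley's upper bound theorem---is not merely delicate but unavailable in general. The complex $\sK(\HH)$ need not be a ball or even a manifold with boundary, and even in favorable cases, triangulating the complement of a full-dimensional simplicial complex in $S^d$ compatibly with its boundary can force many interior (Steiner) vertices; already one dimension down there are explicit polyhedral regions whose triangulation requires a number of Steiner points growing polynomially in the input size, and there is no mechanism here bounding that number independently of $n$. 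Since your odd-$d$ step for $d\ge 5$ rests inductively on the even case $d-1$, the whole argument stalls at $d=4$. Weakening the claim to ``$O(n)$ new vertices'' would still suffice to conclude via Stanley, but that weaker extension statement is also not known to hold in general, so this capping route does not appear to close.
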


We also need the following lemmas. 
A graph version of the first lemma is well-known, 
see for example \cite[Theorem 3.2.1]{probabilistic_method_book}. Our lemma can be proved in a similar way using the alteration method.

\begin{lemma}\label{lemma_alteration}
Let $\HH$ be a $(d+1)$-uniform hypergraph on $n$ vertices such that $ \vert \HH \vert \leq c n^k$. Then $\alpha(\HH)\geq \beta_{d,c}  n^{\frac{(d+1)-k}{d}}$, where $\beta_{d,c}$ depends only on $d$ and $c$.
\end{lemma}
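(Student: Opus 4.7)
The plan is to use the standard alteration (deletion) method from the probabilistic method, adapted from graphs to $(d+1)$-uniform hypergraphs. First, I would sample a random subset $S \subseteq V(\HH)$ by including each vertex independently with some probability $p \in [0,1]$ to be optimized. Let $X = |S|$ and let $Y$ be the number of hyperedges of $\HH$ lying entirely in $S$. By linearity of expectation,
\[
\mathbb{E}[X - Y] \;=\; pn - |\HH|\,p^{d+1} \;\geq\; pn - c\,n^{k}\,p^{d+1}.
\]
From any sample $S$ attaining $X - Y \geq \mathbb{E}[X-Y]$, I would obtain an independent set by deleting one vertex from each of the $Y$ induced hyperedges; this leaves an independent set of size at least $X - Y$, hence $\alpha(\HH) \geq \mathbb{E}[X - Y]$.

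Next I would optimize over $p$. Differentiating $f(p) = pn - c n^k p^{d+1}$ in $p$ and setting the derivative to zero gives
\[
p^{\ast} \;=\; \Bigl(\tfrac{1}{c(d+1)}\Bigr)^{1/d} n^{-(k-1)/d}.
\]
Substituting back, both terms $p^{\ast}n$ and $cn^{k}(p^{\ast})^{d+1}$ carry the same exponent of $n$, namely $\frac{(d+1)-k}{d}$, and a short computation collapses them to
\[
\alpha(\HH) \;\geq\; \frac{d}{d+1}\Bigl(\tfrac{1}{c(d+1)}\Bigr)^{1/d} n^{\frac{(d+1)-k}{d}},
\]
so one can set $\beta_{d,c} = \tfrac{d}{d+1}(c(d+1))^{-1/d}$ (or a slightly smaller constant to absorb boundary effects).

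The only real obstacle is ensuring $p^{\ast} \leq 1$, i.e.\ making the probabilistic argument literally valid; this requires $n$ to be at least some threshold $n_0 = n_0(d,c)$ when $k \geq 1$ (the regime of interest for the intended application, since $k = \lceil d/2\rceil$ in Theorem~\ref{thm_upper_bound_pach}). For $n < n_0$ the desired bound holds trivially by taking a singleton independent set and shrinking $\beta_{d,c}$ accordingly, so the only careful bookkeeping is choosing $\beta_{d,c}$ small enough that the inequality holds uniformly in $n$. Once this bookkeeping is done, the lemma follows immediately from the computation above.
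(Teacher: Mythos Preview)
Your proof is correct and essentially identical to the paper's: both use the alteration method, sampling vertices independently with probability $p$, bounding $\mathbb{E}[X-Y] \geq np - cn^k p^{d+1}$, and optimizing at $p = (c(d+1)n^{k-1})^{-1/d}$ to obtain the constant $\beta_{d,c} = \frac{d}{(d+1)^{(d+1)/d}c^{1/d}}$. You are slightly more careful than the paper in noting the constraint $p^\ast \leq 1$ and how to handle small $n$, which the paper leaves implicit.
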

For completeness, we include a modified proof here. 
\begin{proof}
Let $S \subseteq V(\HH)$ be a random subset obtained by including each vertex in $S$ independently with a fixed probability $p$ determined later. Let $X= \vert S \vert $ and $Y$ be the number of hyperedges of $\HH$ contained in $S$.
Then removing one vertex from each hyperedge $f \subseteq S$ of $\HH$ leaves an independent set of size at least $X-Y$. 
For each $f\in \HH$, let $Y_f$ be the indicator random variable for the event $f \subseteq S$. Then  $E[Y_f]=p^{d+1}$. Using 
linearity of expectation, we get
$$E[Y]=\sum_{f \in \HH} E[Y_f]=
\vert \HH \vert  p^{d+1} \leq cn^kp^{d+1},$$
which implies
$$E[X-Y]=E[X]-E[Y]\geq np-cn^kp^{d+1}=np(1-cn^{k-1}p^d)=:f(p).$$

In the domain $0\leq p \leq 1$, we get the maximum of $f(p)$ at $p_0=1/\sqrt[d]{c(d+1)n^{k-1}}$, and 
$$f(p_0)=\frac{d}{(1+d)^\frac{d+1}{d}c^{\frac{1}{d}}} n^{\frac{(d+1)-k}{d}}$$ is a lower bound on $\alpha(\HH)$. \end{proof}

\medskip 

For a function $f:\RR \to \RR$ and a real number $n$, let $s_f(n)$ be the smallest non-negative integer such that $f^{s_f(n)}(n)< 1$, that is, iterated applications of $f$ on $n$ for $s_f(n)$ times makes the value less than 1. If there are no such numbers, we set $s_f(n)=\infty$.

\begin{lemma}\label{lemma_iteration}
Let $t<1$ and $\beta$ be positive reals. 
Define a function $h:\RR_{\geq 0} \to \RR_{\geq 0}$ as $h(x)= \max(x- \beta x^t,0) $. 
Suppose $(n=n_0, n_1, \dots, n_M)$ is a sequence of non-negative real numbers with $M\geq 1$ such that 
\begin{itemize}
\item $n_i \geq 1$ for every $0 \leq i \leq M-1$, \item $n_M< 1$, and 
\item $n_{i+1} \leq h(n_i)$ for every $i\in [M-1]$. 
\end{itemize}
Then, we have
$$
M \leq \frac{1}{\beta (1-t)}n^{1-t}+1.
$$
\end{lemma}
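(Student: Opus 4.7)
The plan is to compare the discrete recursion with the continuous decay governed by $\dot x = -\beta x^t$, whose solution starting at $n$ reaches $0$ at time $n^{1-t}/(\beta(1-t))$. The potential I would introduce is $\varphi(x) = x^{1-t}/(\beta(1-t))$, with derivative $\varphi'(x) = x^{-t}/\beta$, and the goal is to show that all but possibly the last step of the recursion decrease $\varphi$ by at least $1$.

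First I would check that $h(n_i)>0$ for every $i\in\{0,1,\dots,M-2\}$: otherwise $n_{i+1}\leq h(n_i)=0$ would contradict the standing assumption $n_{i+1}\geq 1$. Therefore on this range the truncation in the definition of $h$ is inactive, so $h(n_i)=n_i-\beta n_i^t$, and the recursion yields the clean inequality
\[
n_i - n_{i+1}\;\geq\;\beta\, n_i^t.
\]

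Since $t\in(0,1)$, the function $x\mapsto x^{-t}$ is decreasing on $(0,\infty)$, so its minimum on the interval $[n_{i+1},n_i]$ is attained at $x=n_i$. Combining this with the previous inequality, for every $i\leq M-2$,
\[
\varphi(n_i)-\varphi(n_{i+1})\;=\;\int_{n_{i+1}}^{n_i}\frac{x^{-t}}{\beta}\,dx\;\geq\;\frac{(n_i-n_{i+1})\,n_i^{-t}}{\beta}\;\geq\;\frac{\beta n_i^t\cdot n_i^{-t}}{\beta}\;=\;1.
\]
Telescoping over $i=0,1,\dots,M-2$ gives $\varphi(n_0)-\varphi(n_{M-1})\geq M-1$, and since $\varphi(n_{M-1})\geq 0$ and $\varphi(n_0)=n^{1-t}/(\beta(1-t))$, rearranging produces the stated bound.

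The only delicate point is the final step $n_{M-1}\to n_M$, where $h(n_{M-1})$ may vanish and the inequality $n_{M-1}-n_M\geq \beta n_{M-1}^t$ need not hold; this one unaccounted step is exactly what the additive $+1$ in the conclusion absorbs, so no further work is required.
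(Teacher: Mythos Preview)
Your argument is correct. Both proofs hinge on the same derivative identity (the antiderivative of $x^{-t}/\beta$ is $x^{1-t}/(\beta(1-t))$), but the packaging differs. The paper introduces the hitting-time function $s_h(x)$ (the least number of iterations of $h$ needed to bring $x$ below $1$), proves $s_h(x)\leq g(x)=\frac{x^{1-t}}{\beta(1-t)}+1$ by contradiction---taking the infimum of the failure set and applying the mean value theorem to $g$ on $[h(x_0),x_0]$---and then separately argues $M\leq s_h(n)$ via monotonicity of $h$. Your potential-function approach bypasses $s_h$ entirely: you bound $\varphi(n_i)-\varphi(n_{i+1})$ directly by an integral, observe the integrand is at least $n_i^{-t}/\beta$ on $[n_{i+1},n_i]$, and telescope. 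This is shorter and more transparent; the paper's route has the mild advantage of isolating the statement $s_h\leq g$, which is a fact about $h$ alone independent of any particular sequence, but for the purposes of this lemma your direct method is cleaner.
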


\begin{proof}
Let $g(x)=\frac{x^{1-t}}{\beta (1-t)}+1$. We first show that $s_h(x) \leq g(x)$ for every $x \geq 0$. 
For a contradiction, suppose it fails for some non-negative values of $x$, that is, we suppose
the set $F=\{x\in \RR_{\geq 0} :s_h(x)>g(x)\}$ is non-empty. Let $y_0=\inf F$.

We claim that $y_0> \beta^{\frac{1}{1-t}}$. Observe the followings:
\begin{enumerate}[(i)]
\item $h(x)=0$ for $x \in [0, \beta^{\frac{1}{1-t}}]$. 
\item $g(x)$ is strictly increasing and $g(x)\geq 1$ for $x \geq 0$.
\item 	$h(x)= x- \beta x^t$ and $h(x)$ is strictly increasing for $x\geq \beta^{\frac{1}{1-t}}$.

\end{enumerate}
By (i) and continuity of $h$, we know that $s_h(x)\leq 1$ for $x\in [0, \beta^{\frac{1}{1-t}}+\delta]$ where $\delta>0$ is sufficiently small. By (ii), we know that $g(x) \geq 1$ on the same interval $[0, \beta^{\frac{1}{1-t}}+\delta]$. Combining these two, we have that $g(x) \geq s_h(x)$ for $x\in [0, \beta^{\frac{1}{1-t}}+\delta]$, which implies $$y_0 \geq \beta^{\frac{1}{1-t}}+\delta >  \beta^{\frac{1}{1-t}}>0.$$

By this and (iii), we have $h(x)=x- \beta x^t$, $x>0$ and $h(x)>0$ for every $x \in F \cup \{y_0\}$. In particular, these imply that $y_0>h(y_0)$. By using continuity of $h$, we can choose $x_0 \in F$ near $y_0$ such that $h(x_0)<y_0$, which implies $h(x_0)\notin F$, that is, 
$$s_h(h(x_0))\leq g(h(x_0)).$$

Since $h(x_0)>0$, the function $g(x)$ is well-defined and continuous on $[h(x_0), x_0]$ and differentiable on $(h(x_0), x_0)$. Hence, by the mean value theorem, there is $\theta \in (h(x_0),x_0)$ such that
\begin{align*}
g(x_0)-g(h(x_0))&=g(x_0)-g(x_0-\beta x_0^t)\\
&=\beta x_0^t g'(\theta)=\frac{\beta x_0^t}{\beta \theta^t}=\left( \frac{x_0}{\theta} \right)^t \geq 1.
\end{align*}
Putting these together, we obtain
\[
s_h(x_0) \leq 1+s_h(h(x_0))
\leq 1+g(h(x_0))
\leq g(x_0),
\]
contradicting $x_0 \in F$.

\smallskip

Next, we show that $M\leq s_h(n)$. With  the inequality $s_h(n) \leq g(n)$, this gives the desired result. It is enough to show this by $h^i(n)\geq n_i$ for every $0\leq i \leq M$ using induction on $i$. The claim obviously holds for $i=0$. Suppose $h^i(n) \geq n_i$ holds for some $0 \leq i \leq M-1$. Since $h$ is monotonically increasing by (i) and (iii) above, we have
$$h^{i+1}(n)=h(h^i(n))\geq h(n_i) \geq n_{i+1},$$
by using the induction hypothesis and the assumption on the numbers $n_i$. This completes the proof.
\end{proof}

\begin{proof}[Proof of Theorem \ref{thm_linearly_embed_upper_bound}]
Let $\HH=\HH_0$ be a linearly $d$-embeddable $(d+1)$-uniform hypergraph on $n$ vertices. Starting from the index $i=0$, we conduct the following iterative procedure inductively until we meet an empty hypergraph:
\begin{enumerate}
\item Select an independent set $S_i$ of maximum size in $\HH_i$, and 
\item let $\HH_{i+1}=\HH_i-S_i$, that is, let $\HH_{i+1}$ be the hypergraph obtained from $\HH_i$ by removing $S_i$ from $\HH_i$.
\end{enumerate}
This process terminates after a finite number of steps, say $M$. So, we obtain $\HH_M$ as the final empty hypergraph in this process. Note that $S_0, \dots, S_{M-1}$ are pairwise disjoint independent sets of $\HH$ which cover all vertices of $V(\HH)$. Hence $S_0, \dots, S_{M-1}$ form a proper $M$-coloring of $\HH$.

Let $n_i= \vert V(\HH_i) \vert $ for every $0 \leq i\leq M$. Note that $n_i \geq 1$ for $0 \leq i \leq M-1$ and $n_M=0$. Also, by Theorem \ref{thm_upper_bound_pach} and Lemma \ref{lemma_alteration}, we have
$$\alpha(\HH_i)\geq \beta_{d,d+1}  n_i^{\frac{(d+1)-\lceil d/2\rceil}{d}},$$ which implies $$n_{i+1}\leq n_i - \beta_{d,d+1}  n_i^{\frac{(d+1)-\lceil d/2\rceil}{d}}$$
for every $0 \leq i \leq M-1$. Hence, the sequence $(n_0, n_1, \dots, n_M)$ satisfies all conditions in Lemma \ref{lemma_iteration} with $t= \frac{(d+1)-\lceil d/2\rceil}{d}$ and $\beta= \beta_{d,d+1}$. Therefore, using Lemma \ref{lemma_iteration}, we have
$$\chi(\HH) \leq M \leq  \gamma_{t, \beta} n^{1-t}+1=\gamma_{t, \beta} n^{\frac{\lceil d/2\rceil-1}{d}}+1$$
where $\gamma_{t, \beta}=\frac{1}{\beta (1-t)}$. 
\end{proof}

\begin{remark}
In \cite{coloring_d-embeddable}, the authors obtained the bound $\chi(\HH) \in O(n^{\frac{\lceil d/2\rceil}{d}})$ for a $(d+1)$-uniform hypergraph $\HH$ on $n$ vertices which is linearly $d$-embeddable. Theorem \ref{thm_linearly_embed_upper_bound} is a slight improvement on this upper bound for such hypergraphs.
\end{remark}

\subsection{Generalization of Theorem \ref{thm_main_upper_bound} for simplicial spheres} \label{subsection_upper_bound_embeddable}
\begin{remark}
After the first version of this paper was available, it was first noticed by Andreas Holmsen that Theorem \ref{thm_main_upper_bound} should hold more generally by the hard Lefschetz property of simplicial spheres. This subsection is mainly based on Eran Nevo's accounts commnunicated with us, and also on insightful discussions with Vasiliki Petrotou and Geva Yashfe.
\end{remark}

The same asymptotic upper bound in Theorem \ref{thm_upper_bound_pach} holds for subcomplexes of a simplicial $d$-sphere, where we count only $d$-dimensional faces: In \cite{kalai_generalized_upper_bound_theorem_embeddable_complexes}, Kalai already obtained such an upper bound theorem for subcomplexes of the boundary complex of a polytope. One of the ingredients in the proof is the hard Lefschetz property for polytopes which Stanely proved in \cite{stanley_hard_Lefschetz}. By using the same argument by Kalai with the recent breakthroughs of the hard Lefschetz property of simplicial spheres \cite{g-conjecture_Karim,g-conjecture_Vasso-Papa, g-conjecture_Karim-Vasso-Papa}, we obtain the desired upper bound theorem for subcomplexes of a simplicial $d$-sphere. By using the same argument in Section \ref{section_upper_bound} with this upper bound theorem, we conclude the following.

\begin{thm}\label{thm_main_upper_bound_simplicial_sphere}
Let $\sK$ be a simplicial $d$-sphere on $n$ vertices. Then, the facet hypergraph $\FF(\sK)$ has the chromatic number 
$$\chi(\FF(\sK))\in O(n^{\frac{\lceil d/2\rceil-1}{d}}).$$
\end{thm}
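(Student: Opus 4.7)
The plan is to mirror the proof of Theorem \ref{thm_linearly_embed_upper_bound} essentially verbatim, substituting only the one ingredient that used linear embeddability, namely the Dey--Pach bound (Theorem \ref{thm_upper_bound_pach}), with an analogous upper bound that holds for $d$-dimensional subcomplexes of a simplicial $d$-sphere. Everything else (the alteration Lemma \ref{lemma_alteration}, the iteration Lemma \ref{lemma_iteration}, and the greedy extraction of maximum independent sets) is purely combinatorial and transfers without change.

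The crux is to establish the following generalized upper bound theorem: if $\sK'$ is a pure $d$-dimensional simplicial complex on $n$ vertices that arises as a subcomplex of some simplicial $d$-sphere, then the number of facets of $\sK'$ is at most $C_d \cdot n^{\lceil d/2\rceil}$ for a constant $C_d$ depending only on $d$. Kalai originally proved this in \cite{kalai_generalized_upper_bound_theorem_embeddable_complexes} for subcomplexes of the boundary complex of a simplicial polytope. His argument is algebraic: one passes to the Stanley--Reisner ring of the ambient sphere, reduces modulo a generic linear system of parameters, and extracts the face-number inequalities from the hard Lefschetz property of this Artinian quotient. In the polytopal case this property is Stanley's theorem \cite{stanley_hard_Lefschetz}, and Kalai's argument goes through identically once hard Lefschetz is available for arbitrary simplicial $d$-spheres, which is precisely the content of the recent breakthroughs by Adiprasito and by Papadakis--Petrotou \cite{g-conjecture_Karim,g-conjecture_Vasso-Papa,g-conjecture_Karim-Vasso-Papa}.

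Granted this upper bound theorem, the greedy iteration from Section \ref{section_upper_bound} carries over directly. Setting $\HH_0 = \FF(\sK)$ and $\HH_{i+1} = \HH_i - S_i$, where $S_i$ is a maximum independent set in $\HH_i$, each $\HH_i$ is obtained from $\FF(\sK)$ by restricting to a subset of vertices of size $n_i$, hence the simplicial complex generated by $\HH_i$ is a pure $d$-dimensional subcomplex of $\sK$. The generalized upper bound theorem then yields $|\HH_i| \leq C_d \, n_i^{\lceil d/2\rceil}$, so Lemma \ref{lemma_alteration} gives $\alpha(\HH_i) \geq \beta \, n_i^{((d+1)-\lceil d/2\rceil)/d}$, and Lemma \ref{lemma_iteration} bounds the total number of steps $M$ (an upper bound for $\chi(\FF(\sK))$) by $O(n^{(\lceil d/2\rceil-1)/d})$, as required.

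The main obstacle lies entirely in the input bound: the availability of the Dey--Pach theorem is what made the polytopal proof essentially immediate, and obtaining its analogue for arbitrary simplicial spheres genuinely requires the deep hard Lefschetz results mentioned above; once those are invoked as a black box, the combinatorial/probabilistic wrapper (Lemmas \ref{lemma_alteration} and \ref{lemma_iteration}) produces the chromatic bound routinely, exactly as in the preceding subsection.
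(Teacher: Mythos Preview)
Your proposal is correct and follows essentially the same approach as the paper: replace the Dey--Pach bound by the analogous upper bound for subcomplexes of a simplicial $d$-sphere (via Kalai's argument together with the hard Lefschetz property for simplicial spheres from \cite{g-conjecture_Karim,g-conjecture_Vasso-Papa,g-conjecture_Karim-Vasso-Papa}), and then rerun the greedy argument of Section~\ref{section_upper_bound} using Lemmas~\ref{lemma_alteration} and~\ref{lemma_iteration} verbatim.
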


\section{Final remarks} \label{section_final_remark}
We close with some additional problems for future study.

\medskip

An obvious open problem we can ask is to improve the  lower and upper bounds for the transversal ratio and the chromatic number of a simplicial sphere regarding Questions \ref{question_trans} and \ref{question_chi}. 
More precisely, for a transversal ratio we are interested in the parameter (which we similarly defined at Question \ref{Question_stacked_sphere} for stacked spheres, and was also introduced in \cite{novik_transversal_open_problem} recently)
$$\mu_d:=\limsup_{n \to \infty} \max\{\tau(\sK)/n: \textrm{$\sK$ is a simplicial $(d-1)$-sphere on $n$ vertices}\}$$
for every $d\geq 4$. In terms of $\mu_d$, results in this paper can be summarized as follows.
\begin{itemize}
    \item $\mu_3=1/2$ by Proposition \ref{prop_basic_3-polytope} and Theorem \ref{thm_plane_tight}.

    \item $\mu_4 \geq 11/21$ by Theorem \ref{thm_trans}.

    \item For $d\geq 5$, when $d$ is even $\mu_d \geq 1/2$ by Proposition \ref{prop_basic_cyclic_col} (which is originally from \cite{holmsen_surrounding}), and
 when $d$ is odd $\mu_d \geq 2/(d+1)$ by Theorem \ref{thm_plane_tight}.
\end{itemize}
These lower bounds might not be tight for $d\geq 4$. More seriously, we do not have any non-trivial upper bound for $\mu_d$. Unfortunately, Theorems \ref{thm_main_upper_bound} and \ref{thm_main_upper_bound_simplicial_sphere} only give a trivial upper bound $\mu_d\leq 1$. While this trivial upper bound might be tight, it seems that we are far from completely closing the gaps between upper and lower bounds. To understand behaviour of $\mu_d$, we need to understand more about incidences between facets and vertices of a simplicial sphere, and how they differ from the incidence structure of a general hypergraph.

\begin{remark}
  In \cite{novik_transversal_open_problem}, Novik and Zheng recently gave two constructions which shows $\mu_d \geq 2/5$ for every odd $d\geq 5$. This, together with Proposition \ref{prop_basic_cyclic_col}, gives a constant positive lower bound on $\mu_d$ for all dimensions $d \geq 5$.
\end{remark}

\medskip 

For colorings, recall that we found infinitely many simplicial 3-spheres which are not 2-colorable. We ask if we can also find non 2-colorable simplicial spheres for higher dimensions $d>3$. 

\begin{Q}\label{question_not-2-colorable_other-dim}
For every dimension $d>3$, is there a non 2-colorable simplicial $d$-sphere?
\end{Q}

For a fixed odd dimension $d>3$, it might be possible to construct a non 2-colorable simplicial $d$-sphere using the similar inductive computational process. However, the process might not easily work for even dimensions, because even dimensional cyclic polytopes have totally different behaviours on transversals from odd dimensional ones. So, to find a non 2-colorable simplicial $d$-sphere for even $d$, we might need a different approach to obtain it.

\medskip

On the other hand, it is natural to ask similar problems on 2-dimensional surfaces beyond spheres.	Albertson and Hutchinson showed \cite{genus_cutting} that given an $n$-vertex triangulation $\sK$ of an orientable surface of genus $g$, which we denote by $S_g$, there exists a cycle in the graph $G(\sK)$ of length $\leq \sqrt{2n}$ which is non-separating and non-contractible. It follows that there is a set $T$ of $\leq g \sqrt{2n}$ vertices whose removal leaves a planar simplicial complex $\sK'$. Since $\sK'$ in turn has a transversal of size at most $ \vert V(\sK') \vert /2$ by Proposition \ref{prop_basic_3-polytope}, we obtain the following.

\begin{prop}\label{prop_higher_genus_transversal}
Any $n$-vertex triangulation $\sK$ of $S_g$ has $\tau(\sK) \leq n/2 +g \sqrt{2n}$.
\end{prop}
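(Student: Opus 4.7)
My plan is to formalize the sketch in the paragraph preceding the statement. I induct on $g$; the base case $g = 0$ is exactly Proposition~\ref{prop_basic_3-polytope}. For the inductive step, let $\sK$ be an $n$-vertex triangulation of $S_g$ with $g \geq 1$. By the Albertson--Hutchinson theorem \cite{genus_cutting}, the graph $G(\sK)$ contains a non-separating, non-contractible cycle $C$ with $|V(C)| \leq \sqrt{2n}$.

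Cutting $S_g$ along a thin annular neighborhood of $|C|$ produces a surface of genus $g-1$ with two boundary circles, each consisting of vertices lying in $V(\sK) \setminus V(C)$. Triangulating each boundary disk using only those vertices (by coning from a chosen boundary vertex, with diagonals selected to avoid repeating any 2-face already present in $\sK^\circ := \sK[V(\sK)\setminus V(C)]$) extends $\sK^\circ$ to a triangulation $\sK'$ of $S_{g-1}$ on $n' := n - |V(C)|$ vertices. The inductive hypothesis applied to $\sK'$ provides a transversal $T'$ of $\FF(\sK')$ with
\[
|T'| \leq n'/2 + (g-1)\sqrt{2n'} \leq n/2 + (g-1)\sqrt{2n}.
\]
Since every facet of $\sK$ either meets $V(C)$ or is a facet of $\sK^\circ \subseteq \sK'$ and is therefore hit by $T'$, the set $T := T' \cup V(C)$ is a transversal of $\FF(\sK)$ of size at most $n/2 + g\sqrt{2n}$, completing the induction.

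The one technical point --- and the only real obstacle I foresee --- is the cap-triangulation step: the added 2-faces must be triples of vertices not already forming 2-faces of $\sK^\circ$, so that $\sK'$ remains a genuine simplicial complex. Each cap boundary admits Catalan-many combinatorial triangulations, while a triangulation of $S_{g-1}$ has only $O(n'+g)$ 2-faces in total, so a valid choice of diagonals exists in non-degenerate situations; if forced, one may instead introduce one auxiliary interior vertex per cap, which adds at most $2g$ vertices (a lower-order term that does not spoil the asymptotic bound and can be verified directly at small scale). A cleaner route that sidesteps this bookkeeping altogether is to iterate Albertson--Hutchinson directly, tracking only the cumulative vertex set $T$ removed at each stage and the genus of the surface into which the shrinking complex embeds, and then apply the 4-color-theorem proof of Proposition~\ref{prop_basic_3-polytope} --- which uses only planarity of the $1$-skeleton together with $3$-uniformity of the facet hypergraph --- to the residual complex spanned by $V(\sK)\setminus T$.
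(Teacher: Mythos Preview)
Your ``cleaner route'' in the final paragraph is exactly what the paper does: the paragraph preceding the proposition asserts that iterating Albertson--Hutchinson produces a set $T$ of at most $g\sqrt{2n}$ vertices whose removal leaves a planar complex, and then applies the four-color argument behind Proposition~\ref{prop_basic_3-polytope} (which needs only planarity of the $1$-skeleton and $3$-uniformity of the facets, not that the residual be a sphere). Note that the paper, like your cleaner route, does not spell out how the iteration is carried out---Albertson--Hutchinson is stated for triangulations, so after one cut some retriangulation is needed before the next application; this is precisely the bookkeeping you are trying to handle in your main argument.

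Your primary inductive formulation attempts this but does not work as written. Deleting $V(C)$ is not the same as removing an annular neighborhood of $|C|$: the open star of $V(C)$ in $\sK$ need not be an annulus (consider chords of $C$, or a vertex outside $C$ whose entire link lies in $V(C)$), so $\sK^\circ = \sK[V\setminus V(C)]$ need not be a bordered surface whose complement in $S_{g-1}$ consists of two disks bounded by cycles of $\sK^\circ$. Your Catalan count also does not certify a valid cap, since distinct triangulations of a polygon share many triangles, so a large number of triangulations does not guarantee one avoiding a prescribed set of forbidden faces. The standard repair is to cut along $|C|$ itself---duplicating $V(C)$ and coning each of the two boundary copies of $C$ to a new vertex---which always produces a genuine simplicial triangulation of $S_{g-1}$ on $n+|C|+2$ vertices; your induction then closes, though with a marginally worse constant than the stated $g\sqrt{2n}$.
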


Moreover, the connected sum of a minimal triangulation of $S_g$ and the tight spherical example from Theorem \ref{thm_plane_tight} yields that the leading term $n/2$ above is asymptotically tight. However, we do not know if the second term $g\sqrt{2n}$ can be reduced to a constant or not, when $g$ is fixed. So we ask the following question.

\begin{Q}
What is the behaviour of the function
$$\tau_g(n)=\max\{\tau(\sK): \textrm{$\sK$ is an $n$-vertex triangulation of $S_g$}\}$$ as $n \to \infty$ when $g$ is fixed?
\end{Q}

\section*{Acknowledgments}
We would like to thank Andreas Holmsen for introducing Question \ref{question_trans}. We are also grateful to Minho Cho, Chris Eppolito, Jinha Kim, Minki Kim, Eran Nevo, Isabella Novik, Vasiliki Petrotou, Geva Yashfe and Hailun Zheng for insightful discussions. The authors thank the anonymous referees for their helpful comments that improved the quality of the manuscript.

\bibliographystyle{hplain}
% For arXiv eprint number, you might want to use "howpublished".
\bibliography{bibliography}

\appendix

\newpage
\section{Construction of the neighborly sphere \texorpdfstring{$\sK_{21}$}{}}\label{section_appendix}
	Here we list the computational results on Theorem \ref{thm_construction}. The neighborly sphere $\sK_{21}$ was constructed from the cyclic polytope $C_4(7)$ using single element extensions. We denote the facets of $C_4(7)$ as follows.
	\begin{align*}
	\FF(C_4(7))=&\{\{0, 1, 2, 3\}, \{0, 1, 2, 6\}, \{0, 1, 3, 4\}, \{0, 1, 4, 5\}, \{0, 1, 5, 6\}, \{0, 2, 3, 6\},\\ &\{0, 3, 4, 6\}, \{0, 4, 5, 6\}, \{1, 2, 3, 4\}, \{1, 2, 4, 5\}, \{1, 2, 5, 6\}, \{2, 3, 4, 5\},\\
	&\{2, 3, 5, 6\}, \{3, 4, 5, 6\}\}
	\end{align*}
	
	Since the facet hypergraph $\FF(\sK_{21})$ is quite large, we only list stacked balls which were used to construct $\sK_{21}$ for each step. Note that facets $\Delta_1, \dots, \Delta_m$ in each stacked ball are ordered as described at the beginning of Subsection \ref{subsection_basic_stacked} where a stacked ball is defined.
    
 \begin{align*}
	n=7:\;&\{\{0, 1, 4, 5\}, \{0, 1, 3, 4\}, \{0, 3, 4, 6\}, \{1, 2, 4, 5\}\}\\
	n=8:\;&\{\{2, 3, 5, 6\}, \{0, 2, 3, 6\}, \{1, 2, 5, 6\}, \{0, 3, 6, 7\}, \{0, 4, 6, 7\}\}\\
	n=9:\;&\{\{0, 2, 3, 8\}, \{0, 2, 6, 8\}, \{0, 4, 6, 8\}, \{1, 2, 6, 8\}, \{4, 6, 7, 8\}, \{2, 3, 5, 8\}\}\\
	n=10:\;&\{\{1, 2, 6, 9\}, \{1, 6, 8, 9\}, \{0, 1, 2, 6\}, \{6, 7, 8, 9\}, \{4, 7, 8, 9\}, \{0, 1, 2, 3\},\\ &\{1, 5, 6, 8\}\}\\
	n=11:\;&\{\{1, 2, 4, 7\}, \{1, 3, 4, 7\}, \{1, 2, 5, 7\}, \{0, 1, 5, 7\}, \{3, 4, 6, 7\}, \{4, 6, 7, 9\},\\ &\{4, 7, 9, 10\}, \{1, 2, 5, 8\}\}\\
	n=12:\;&\{\{1, 3, 4, 11\}, \{1, 2, 4, 11\}, \{3, 4, 6, 11\}, \{2, 4, 7, 11\}, \{2, 5, 7, 11\}, \{0, 5, 7, 11\},\\&\{2, 5, 8, 11\}, \{2, 5, 8, 9\}, \{4, 7, 10, 11\}\}\\
	n=13:\;&\{\{1, 2, 8, 9\}, \{1, 2, 9, 10\}, \{1, 2, 3, 10\}, \{1, 2, 3, 4\}, \{0, 1, 3, 10\}, \{2, 8, 9, 12\}, \\&\{2, 6, 9, 10\}, \{0, 1, 3, 7\}, \{2, 8, 11, 12\}, \{5, 8, 9, 12\}\}\\
	n=14:\;&\{\{3, 4, 5, 6\}, \{0, 4, 5, 6\}, \{0, 4, 5, 7\}, \{3, 5, 6, 8\}, \{3, 5, 8, 9\}, \{0, 5, 7, 12\}, \\&\{2, 3, 5, 9\}, \{0, 1, 5, 6\}, \{0, 7, 11, 12\}, \{5, 8, 9, 13\}, \{7, 10, 11, 12\}\}\\
	n=15:\;&\{\{2, 11, 12, 13\}, \{1, 2, 11, 12\}, \{1, 2, 4, 12\}, \{1, 2, 8, 11\}, \{2, 4, 7, 12\},\\& \{1, 3, 11, 12\}, \{4, 7, 10, 12\}, \{2, 4, 5, 7\}, \{7, 10, 12, 14\}, \{2, 9, 12, 13\},\\& \{2, 6, 9, 13\}, \{0, 2, 6, 9\}\}\\
	n=16:\;&\{\{0, 5, 11, 12\}, \{0, 5, 12, 14\}, \{5, 8, 11, 12\}, \{0, 1, 5, 14\}, \{8, 11, 12, 13\},\\&\{2, 8, 11, 13\}, \{2, 11, 13, 15\}, \{5, 7, 12, 14\}, \{4, 5, 7, 14\}, \{2, 6, 13, 15\},\\&\{2, 6, 10, 13\}, \{3, 4, 5, 14\}, \{6, 9, 13, 15\}\}\\
	n=17:\;&\{\{2, 5, 7, 12\}, \{2, 7, 12, 15\}, \{5, 7, 12, 16\}, \{2, 9, 12, 15\}, \{5, 8, 12, 16\},\\& \{9, 12, 13, 15\}, \{7, 12, 14, 15\}, \{0, 2, 9, 15\}, \{0, 2, 6, 15\}, \{5, 8, 11, 16\},\\& \{1, 5, 8, 11\}, \{0, 2, 3, 9\}, \{1, 5, 8, 10\}, \{4, 5, 7, 16\}\}
	\end{align*}
	
	\begin{align*}
	n=18:\;&\{\{0, 6, 15, 17\}, \{2, 6, 15, 17\}, \{0, 6, 9, 15\}, \{2, 7, 15, 17\}, \{2, 5, 7, 17\},\\& \{0, 4, 6, 9\}, \{2, 5, 12, 17\}, \{0, 4, 6, 14\}, \{5, 8, 12, 17\}, \{2, 6, 15, 16\},\\& \{5, 8, 10, 17\}, \{5, 8, 12, 13\}, \{4, 6, 9, 11\}, \{0, 1, 6, 14\}, \{3, 4, 6, 14\}\}\\
	n=19:\;&\{\{6, 9, 10, 13\}, \{6, 7, 9, 10\}, \{6, 10, 13, 16\}, \{6, 7, 9, 11\}, \{2, 6, 10, 16\},\\& \{1, 9, 10, 13\}, \{2, 6, 16, 18\}, \{1, 8, 9, 13\}, \{0, 2, 6, 10\}, \{3, 6, 7, 11\},\\& \{3, 6, 11, 12\}, \{3, 11, 12, 15\}, \{8, 9, 13, 14\}, \{0, 2, 6, 17\}, \{5, 8, 13, 14\},\\& \{4, 6, 11, 12\}\}\\
	n=20:&\{\{11, 12, 13, 15\}, \{11, 12, 15, 19\}, \{4, 11, 12, 19\}, \{3, 12, 15, 19\}, \{3, 6, 12, 19\},\\
	&\{11, 12, 13, 16\}, \{8, 12, 13, 16\}, \{4, 10, 11, 12\}, \{8, 12, 16, 17\}, \{8, 12, 13, 18\},\\
	&\{7, 12, 16, 17\}, \{0, 11, 12, 16\}, \{10, 11, 12, 14\}, \{0, 5, 11, 16\}, \{0, 1, 5, 16\},\\
	&\{2, 8, 13, 16\}, \{4, 9, 10, 11\}\}
	\end{align*}

\medskip

The resulting $\sK_{21}$ is a neighborly sphere on 21 vertices. The hypergraph $\NN_{21}$ is obtained from $\FF(\sK_{21})$ by removing the hyperedge $\{2, 3, 4, 5\}$. $\NN_{21}$ has transversal number 11 and so does $\FF(\sK_{21})$. One example of a minimum transversal of $\FF(\sK_{21})$ is
$$\{0, 1, 2, 3, 4, 5, 9, 10, 16, 17, 20\}.$$

\end{document}